\newtheorem{definition}{Definition}[section]
\newtheorem{theorem}[definition]{Theorem}
\newtheorem{lemma}[definition]{Lemma}
\newtheorem{corollary}[definition]{Corollary}
\newtheorem{remark}[definition]{Remark}
\newcommand\R{\mathbb{R}}
\newcommand{\norm}[1]{ \left\lVert#1\right\rVert}
\newcommand{\mathbbm}[1]{\text{\usefont{U}{bbm}{m}{n}#1}}
\thanks{M. Saucedo is supported by  the Spanish Ministry of Universities through the FPU contract FPU21/04230.
 S. Tikhonov is supported
by PID2023-150984NB-I00, 2021 SGR 00087. This work is supported by
the CERCA Programme of the Generalitat de Catalunya and the Severo Ochoa and Mar\'ia de Maeztu
Program for Centers and Units of Excellence in R\&D (CEX2020-001084-M)}
\author{Miquel Saucedo}
\address{M.  Saucedo,  Centre de Recerca Matemàtica\\
Campus de Bellaterra, Edifici C
08193 Bellaterra (Barcelona), Spain}
\email{miquelsaucedo98@gmail.com }
\author{Sergey Tikhonov}
\address{S. Tikhonov, Centre de Recerca Matem\`{a}tica\\
Campus de Bellaterra, Edifici C
08193 Bellaterra (Barcelona), Spain;
ICREA, Pg. Lluís Companys 23, 08010 Barcelona, Spain,
 and Universitat Autònoma de Barcelona.}
\email{ stikhonov@crm.cat}
\subjclass[2010]{Primary  42B10, 42B35; Secondary 46E30.}
\keywords{Fourier transform,
    Poisson formula, weighted Lebesgue space}
\title{Poisson summation formula in weighted Lebesgue spaces
}
\begin{document}
\begin{abstract}
    We characterize the parameters $(\alpha,\beta,p,q)$ for which the condition $f|x|^\alpha\in L^p$ and $\widehat{f}|\xi|^\beta\in L^q$ implies the validity of the Poisson summation formula, thus completing the study of Kahane and Lemarié-Rieusset.
\end{abstract}
\maketitle

\section{Introduction}

The Fourier transform of an integrable function $f: \mathbb{R}\to \mathbb{C}$, 
is defined by
 \begin{eqnarray*} \widehat{f}(\xi)=\int_{\mathbb{R}} {f}(x) e^{-2\pi i  x \xi } d x.
 \end{eqnarray*}
The classical Poisson summation formula (PSF) states that if 
 \begin{eqnarray*}
   f|x|^\alpha, \widehat{f}|\xi|^\beta \in L^\infty, \quad \alpha,\beta>1,
\end{eqnarray*}
then $f$ and $\widehat{f}$ are both continuous and the formula

\begin{equation}\label{eqn:einstein}
    \lim_N P_N(f)=\lim_N P_N(\widehat{f})
    \tag{PSF}
  \end{equation}
is true,
 where $$P_N(g)=\sum_{|k|\leq N} g(k).$$
By a classical example of Katznelson it is well known that there exist continuous functions with $f,\widehat{f}\in L^1$
for which the Poisson summation formula fails, in the sense that both sides of \eqref{eqn:einstein}
 converge to different values.

In \cite{Kahane1994},
 Kahane and Lemarié-Rieusset obtained the following almost complete characterization of the 
 validity of \eqref{eqn:einstein} under the condition $f|x|^\alpha\in L^p$ and $\widehat{f}|\xi|^\beta\in L^q$:
 
 \begin{theorem} \cite{Kahane1994}
\label{theorem:kah} Let
$1\le p,q\le\infty$ and
$\alpha-\frac{1}{p'}, \beta- \frac{1}{q'}>0$. Let $f$ be a locally integrable function on $\mathbb{R}$ such that 
\begin{equation}
    \label{space}
\norm{f |x|^{\alpha}}_p + \norm{\widehat{f}|\xi|^{\beta}}_q< \infty.
\end{equation}
Then,
\begin{enumerate}
    \item if $(\alpha - \frac{1}{p'})(\beta- \frac{1}{q'})>\frac{1}{pq}$,  $\lim_N P_N(f)= \lim_N P_N(\widehat{f})$;

    \item 
    if $(\alpha - \frac{1}{p'})(\beta- \frac{1}{q'})<\frac{1}{pq}$, there exists a function  $f$ for which $P_N(f)$ and $P_N(\widehat{f})$ converge to different values.
\end{enumerate}
\end{theorem}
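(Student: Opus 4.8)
The plan is to treat the two implications separately: part (1) by a reduction to a sharp embedding together with a periodization argument, and part (2) by an explicit multiscale construction. Throughout write $a:=\alpha-\tfrac{1}{p'}>0$, $b:=\beta-\tfrac{1}{q'}>0$, and $I_n:=[n,n+1)$. First I would record the preliminaries. Hölder's inequality in the weight together with $\alpha>\tfrac1{p'}$ gives $f\in L^1(\{|x|>1\})$, so with local integrability $f\in L^1(\R)$ and $\wh f\in C_0$; symmetrically $\wh f\in L^1(\R)$ and $f\in C_0$. The same estimate shows $f(x)(1+|x|)^{\gamma}\in L^1$ for every $\gamma<a$, whence $\wh f$ is Hölder continuous of some positive order, and likewise for $f$.

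\emph{Reduction of (1).} I claim that it suffices to show, under $ab>\tfrac1{pq}$, that both $f$ and $\wh f$ lie in the Wiener amalgam space $W(L^\infty,\ell^1)=\{\,g:\ \sum_{n\in\Z}\norm{g}_{L^\infty(I_n)}<\infty\,\}$. Indeed, then the periodization $F(x):=\sum_n f(x+n)$ converges uniformly, so $F\in C(\mathbb{T})$, $F(0)=\sum_n f(n)$ is an absolutely convergent series, and $\lim_N P_N(f)=F(0)$; moreover $\wh F(k)=\wh f(k)$ and $\sum_k|\wh f(k)|\le\sum_k\norm{\wh f}_{L^\infty(I_k)}<\infty$, so the Fourier series of $F$ converges uniformly to $F$, giving $F(0)=\sum_k\wh f(k)$ and $\lim_N P_N(\wh f)=F(0)$; hence the two limits coincide. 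By the symmetry $f\leftrightarrow\wh f$, $(\alpha,p)\leftrightarrow(\beta,q)$ (using $\wh{\wh f}(x)=f(-x)$ and the invariance of the hypotheses), it then remains only to prove $f\in W(L^\infty,\ell^1)$.

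\emph{The amalgam estimate — the heart of (1).} To prove $f\in W(L^\infty,\ell^1)$ I would take a Littlewood--Paley decomposition $f=\sum_{j\ge0}f_j$, with $\wh{f_j}$ supported in $\{|\xi|\sim 2^j\}$, and estimate $\norm{f_j}_{L^\infty(I_n)}$ two ways: from the frequency side, $\norm{f_j}_{L^\infty}\le\norm{\wh{f_j}}_{1}\lesssim 2^{-jb}e_j$ with $\{e_j\}\in\ell^q$; from the spatial side, writing $f_j=f*\psi_j$ with $\psi_j$ an $L^1$-normalized dilate of a fixed Schwartz bump, $\norm{f_j}_{L^\infty(I_n)}\lesssim 2^{j/p}\langle n\rangle^{-\alpha}\delta_n+2^{-jM}\langle n\rangle^{-\alpha}$ for any large $M$, where $\{\delta_n\}\in\ell^p$ (the far term being summed via the Schwartz decay and Hölder in $n$, using $\{\langle m\rangle^{\alpha}\norm{f}_{L^p(I_m)}\}\in\ell^p$). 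Balancing the two bounds at the optimal crossover scale for each $n$, summing in $j$, and then summing in $n$ by Hölder yields $f\in W(L^\infty,\ell^1)$ in the range $ab>\tfrac1p$. \emph{Recovering the optimal threshold $ab>\tfrac1{pq}$ is the main obstacle.} The slack is lost in the crude bound $\norm{f_j}_\infty\le\norm{\wh{f_j}}_1$, which throws away the $L^q$-structure of the hypothesis when $q>1$; the remedy is to retain it — via Hausdorff--Young (with a localization argument when $q>2$) this places $f$ in a Besov-type space determined by $\beta$ and $q$ — and to run the spatial/frequency interpolation sharply, whereupon the exponents combine to give exactly $ab>\tfrac1{pq}$, the borderline case $ab=\tfrac1{pq}$ being genuinely delicate.

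\emph{Part (2): the counterexample.} When $ab<\tfrac1{pq}$ I would construct $f$ explicitly, adapting Katznelson's classical example to the weighted setting. Take $f=\sum_j\varepsilon_j g_j$ over a rapidly increasing sequence of scales $R_j$, where $g_j$ is a smooth bump of spatial scale $\sim R_j$ and dual scale $\sim 1/R_j$ about a frequency $S_j$, normalized by a power $R_j^{-\kappa}$, and the signs $\varepsilon_j=\pm1$, the spatial centres, and the frequencies $S_j$ are tuned so that the symmetric partial sums $P_N$ of the two sides both converge but to different limits — the mechanism being that $\lim_N$ fails to commute with $\sum_j$, since the two partial sums register the individual blocks at different stages. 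The normalization $\kappa$ is chosen so that $\norm{f|x|^\alpha}_p$ and $\norm{\wh f|\xi|^\beta}_q$ are finite; a single-block computation ($\norm{g_j|x|^\alpha}_p\sim R_j^{\alpha+1/p-\kappa}$ and $\norm{\wh{g_j}|\xi|^\beta}_q\sim R_j^{1-1/q-\kappa}S_j^{\beta}$) shows that this finiteness is compatible with the failure mechanism precisely when $ab<\tfrac1{pq}$. The principal difficulty here is to secure \emph{simultaneously} the finiteness of both weighted norms, honest convergence of $P_N(f)$ and of $P_N(\wh f)$, and distinctness of their limits — a balancing of the parameters $R_j$, $S_j$, the centres and the signs that is feasible exactly under the strict inequality.
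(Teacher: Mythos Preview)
Your reduction in part (1) contains a genuine gap: the claim that $f\in W(L^\infty,\ell^1)$ whenever $ab>\tfrac1{pq}$ is \emph{false} once $q>2$. Indeed, $f\in W(L^\infty,\ell^1)$ forces $\sum_n|f(n)|<\infty$, but Theorem~\ref{theorem:extkah2}(3) (and already the negative half of Theorem~\ref{theorem:kah2} from \cite{Kahane1994}) shows that for $q>2$ and $\tfrac1{pq}<ab\le\tfrac1{2p}$ there exist $f$ satisfying \eqref{space} with $\sum_n|f(n)|=\infty$. So no sharpening of the Littlewood--Paley/Hausdorff--Young argument can recover the amalgam embedding in this range; the obstacle you flag is not a technicality but an actual obstruction. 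The correct threshold for absolute summability of $\sum f(n)$ is $ab>\max(\tfrac1{pq},\tfrac1{2p})$, strictly above $\tfrac1{pq}$ when $q>2$ (and symmetrically for $\sum\wh f(k)$ when $p>2$). Your strategy therefore proves (1) only in the subrange $ab>\max(\tfrac1{pq},\tfrac1{2p},\tfrac1{2q})$; cf.\ Remark~\ref{remark}(5).

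The paper's route to (1) sidesteps absolute convergence entirely. It proves the uniform bound $|P_N(f)|\lesssim\norm{f|x|^\alpha}_p+\norm{\wh f|\xi|^\beta}_q$ directly by duality: write $\sum_{k=0}^N f(k)=\int f\,\overline F+\int\wh f\,\overline{\wh G}$, where $F(x)=\sum_k\Delta_k\phi((x-k)\Delta_k)$ is a sum of bumps at scales $\Delta_k=1+k^B$ and $G$ carries the complementary Dirac part. Choosing $B$ in the open interval $\bigl(\tfrac{1/q}{\beta-1/q'},\tfrac{\alpha-1/p'}{1/p}\bigr)$, which is nonempty exactly when $ab>\tfrac1{pq}$, one bounds $\norm{F(1+|x|)^{-\alpha}}_{p'}$ and $\norm{\wh G(1+|\xi|)^{-\beta}}_{q'}$ uniformly in $N$ via Lemma~\ref{lemma:sbp} and the Hardy--Littlewood inequality for monotone Fourier coefficients (see Lemma~\ref{lemma:Ngamma} and Remark~\ref{remark1}); density of Schwartz functions then gives $\lim_NP_N(f)=\lim_NP_N(\wh f)$. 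The key difference is that this argument controls the \emph{signed} partial sums and never asks either series to converge absolutely.

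For (2) your sketch is in the right spirit, though the simultaneous tuning you describe is where all the work lies. The present paper does not reprove (2); it is quoted from \cite{Kahane1994}.
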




  Kahane and Lemarié-Rieusset also considered the question of the absolute convergence of the series $P_N(f)$ and obtained the following theorem:
\begin{theorem} \cite{Kahane1994}
\label{theorem:kah2} Let
$1\le p,q\le\infty$ and
$\alpha-\frac{1}{p'}, \beta- \frac{1}{q'}>0$. Let $f$ be a locally integrable function on $\mathbb{R}$ such that 
\eqref{space} holds.

Then, 
\begin{enumerate}
    \item if $(\alpha - \frac{1}{p'})(\beta- \frac{1}{q'})>\max(\frac{1}{pq},\frac{1}{2p}) $, $\lim_N P_N(|f|)<\infty$;

    \item if $q>2$ and
   $(\alpha - \frac{1}{p'})(\beta- \frac{1}{q'})<\frac{1}{2p}$,  there exists a function $f$ for which  $\lim_N P_N(|f|)=\infty$.
   
\end{enumerate}
\end{theorem}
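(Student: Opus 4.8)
The plan is to pass to the $\Z$-periodization of $\widehat f$. The local integrability of $f$ together with $\alpha>\tfrac1{p'}$ gives $f\in L^1(\R)$, hence $\widehat f\in C_0$, and then $\beta>\tfrac1{q'}$ gives $\widehat f\in L^1(\R)$, hence $f\in C_0$ as well; so $\Phi(\xi)=\sum_{n\in\Z}\widehat f(\xi+n)$ is a well-defined function on $\mathbb T=\R/\Z$ with $\|\Phi\|_{L^q(\mathbb T)}\le\sum_n\|\widehat f\|_{L^q([n,n+1])}\lesssim\big\|(1+|n|)^{-\beta}\big\|_{\ell^{q'}}\big\|\widehat f\,|\xi|^\beta\big\|_{L^q}<\infty$, and its Fourier coefficients are $\widehat\Phi(k)=f(-k)$. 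Thus $\lim_N P_N(|f|)=\|\widehat\Phi\|_{\ell^1(\Z)}$: part (1) asks us to prove $\widehat\Phi\in\ell^1$, and part (2) to construct an $f$ whose periodization has non-summable Fourier coefficients. The finitely many integers near the origin are harmless since $f\in C_0$, so in both parts only the behaviour along the tails matters.

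\textbf{Part (1).} Decompose $f$ dyadically in space, $f=\sum_{m\ge0}f_m$ with $f_m=f\eta_m$ for a partition of unity $\{\eta_m\}$ adapted to $\{|x|\sim 2^m\}$, so that $\lim_N P_N(|f|)=\sum_m\sum_k|f_m(k)|$ and $\|f_m\|_p\lesssim 2^{-m\alpha}a_m$ with $(a_m)\in\ell^p$. Split each $f_m$ into Littlewood--Paley pieces $(f_m)_j$ supported in $\{|\xi|\sim 2^j\}$. Since $(f_m)_j$ is band-limited, the Plancherel--P\'olya sampling inequality gives $\sum_k|(f_m)_j(k)|\lesssim 2^{\max(j,0)}\|(f_m)_j\|_1\lesssim 2^{\max(j,0)}2^{m/p'}\|(f_m)_j\|_p$; counting the $\sim 2^m$ integers in the annulus gives $\sum_k|(f_m)_j(k)|\lesssim 2^m\|(f_m)_j\|_\infty\lesssim 2^m2^{j/q'}\|\widehat{(f_m)_j}\|_q$; and Plancherel gives $\sum_{|k|\sim2^m}|(f_m)_j(k)|\lesssim 2^{m/2}\big(\sum_k|(f_m)_j(k)|^2\big)^{1/2}\lesssim 2^{m/2}2^{\max(j,0)/2}\|(f_m)_j\|_2$. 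Estimating the frequency-side norms by the $L^q$-weight, $\|\widehat{(f_m)_j}\|_q\lesssim 2^{-j\beta}c_j$ with $(c_j)\in\ell^q$, and using Hausdorff--Young ($\|g\|_{q'}\le\|\widehat g\|_q$ when $q\le 2$, resp.\ $\|\widehat g\|_q\le\|g\|_{q'}$ when $q\ge 2$) to convert between space- and frequency-side quantities, one keeps for each $(m,j)$ the most favourable of the resulting bounds. A direct (if lengthy) computation then shows that the remaining double geometric-type series in $(m,j)$ converges exactly when $(\alpha-\tfrac1{p'})(\beta-\tfrac1{q'})>\tfrac1{pq}$ if $q\le 2$, and when $(\alpha-\tfrac1{p'})(\beta-\tfrac1{q'})>\tfrac1{2p}$ if $q>2$ --- the jump reflecting that for $q>2$ the Hausdorff--Young step costs something and must be routed through the Plancherel ($L^2$) bound.

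\textbf{Part (2).} Fix a Schwartz bump $\psi$ supported in $(-\tfrac12,\tfrac12)$ with $\psi(0)=1$, pick sequences $\delta_m\downarrow 0$, $H_m\downarrow 0$, and independent signs $(\varepsilon_{m,k})_{|k|\sim 2^m}$, and set
\[ f=\sum_{m\ge m_0}f_m,\qquad f_m(x)=H_m\!\!\sum_{|k|\sim 2^m}\!\varepsilon_{m,k}\,\psi\!\Big(\tfrac{x-k}{\delta_m}\Big). \]
The bumps are pairwise disjoint, so $f(k)=H_m\varepsilon_{m,k}$ for $|k|\sim 2^m$, whence $\lim_N P_N(|f|)=\sum_m 2^m H_m$, and $\|f\,|x|^\alpha\|_p^p\asymp\sum_m 2^{m\alpha p}H_m^p\,2^m\delta_m$. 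On the Fourier side $\widehat{f_m}(\xi)=H_m\delta_m\,\widehat\psi(\delta_m\xi)\,R_m(\xi)$ with $R_m(\xi)=\sum_{|k|\sim 2^m}\varepsilon_{m,k}e^{-2\pi i k\xi}$ a $1$-periodic random trigonometric sum, so Khintchine's inequality gives $\mathbb E\|R_m\|_{L^q(\mathbb T)}^q\asymp 2^{mq/2}$, and integrating $|R_m|^q|\xi|^{\beta q}$ over the window $|\xi|\lesssim\delta_m^{-1}$ yields $\mathbb E\big\|\widehat{f_m}\,|\xi|^\beta\big\|_q\lesssim H_m 2^{m/2}\delta_m^{-(\beta-1/q')}$, hence $\mathbb E\big\|\widehat f\,|\xi|^\beta\big\|_q\lesssim\sum_m H_m 2^{m/2}\delta_m^{-(\beta-1/q')}$. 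Now take $H_m=2^{-m}/m$ and $\delta_m=2^{-m\theta/(2(\beta-1/q'))}$: then $\sum_m 2^m H_m=\sum 1/m=\infty$, while $\|f\,|x|^\alpha\|_p<\infty$ and $\mathbb E\|\widehat f\,|\xi|^\beta\|_q<\infty$ both hold provided $2p(\alpha-\tfrac1{p'})(\beta-\tfrac1{q'})<\theta<1$. This range of $\theta$ is nonempty precisely because $(\alpha-\tfrac1{p'})(\beta-\tfrac1{q'})<\tfrac1{2p}$; selecting a realization of the signs for which $\|\widehat f\,|\xi|^\beta\|_q<\infty$ then gives the desired $f$. (Incidentally, $q>2$ is not used in this construction; it is assumed because for $q\le 2$ one has $\tfrac1{2p}<\tfrac1{pq}$, so this failure range already lies inside the Poisson-summation failure range of Theorem \ref{theorem:kah}.)

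\textbf{Where the difficulty lies.} In (1) everything away from the two ``ends'' of the double sum is elementary; the real work is at low frequencies $j\to-\infty$ (tied to the decay of $f$ at infinity) and high frequencies $j\to+\infty$ (tied to the singularity of $f$ at the origin), where the pieces are not separately summable and one must exploit the joint structure of the two weighted conditions --- first $\alpha>\tfrac1{p'}$, $\beta>\tfrac1{q'}$, then the product condition --- to close the estimate; making the Hausdorff--Young/Plancherel bookkeeping sharp enough to hit exactly $\max(\tfrac1{pq},\tfrac1{2p})$ is the crux. In (2) the obstacle is the simultaneous optimization of $(H_m,\delta_m)$ against all three constraints and the verification that the Khintchine bound for $\|R_m\|_{L^q(\mathbb T)}$, together with the length $\delta_m^{-1}$ of the frequency window, is not wasteful --- i.e.\ that the construction genuinely attains the threshold $\tfrac1{2p}$ shown necessary by part (1).
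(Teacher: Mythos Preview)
This theorem is quoted from \cite{Kahane1994} and is not proved in the present paper, which only treats the endpoint cases in Theorem~\ref{theorem:extkah2}. I can nevertheless compare your proposal with the machinery the paper sets up.

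\textbf{Part (2)} is essentially the construction the paper carries out at the endpoint $(\alpha-\tfrac1{p'})(\beta-\tfrac1{q'})=\tfrac1{2p}$ in the proof of Theorem~\ref{theorem:extkah2}: sums of scaled bumps $\phi(\Delta_k(x-k))$ at the integers with random signs, Khintchine's inequality (Lemma~\ref{lemma:random}) to replace the $L^q(\mathbb T)$ norms of the associated trigonometric sums by their $L^2$ norms, and Lemma~\ref{lemma:sbp} to pass to $\|F|x|^\alpha\|_p$ and $\|\widehat F|\xi|^\beta\|_q$. Your dyadic-block parameters $(H_m,\delta_m)$ are a cosmetic repackaging of the paper's $(c_k,\Delta_k)$, and your parameter count is correct and hits the same threshold.

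\textbf{Part (1)} has a genuine gap. After the double decomposition $f=\sum_{m,j}(f_m)_j$ you write $\|\widehat{(f_m)_j}\|_q\lesssim 2^{-j\beta}c_j$ with $(c_j)\in\ell^q$ independent of $m$; but $\widehat{f_m}=\widehat f*\widehat{\eta_m}$ is $\widehat f$ smeared at scale $2^{-m}$, and for $2^j\lesssim 2^{-m}$ the weighted hypothesis on $\widehat f$ gives no such bound. Symmetrically, $\|(f_m)_j\|_1\lesssim 2^{m/p'}\|(f_m)_j\|_p$ assumes $(f_m)_j$ still sits in $\{|x|\sim 2^m\}$, which the Littlewood--Paley projection destroys once $2^{-j}\gtrsim 2^m$. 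The ``direct (if lengthy) computation'' you defer is therefore the whole proof: it is precisely in the interaction of the two localisations that the threshold $\max(\tfrac1{pq},\tfrac1{2p})$ has to appear, and nothing in your outline forces it to.

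The route suggested by the paper's Lemma~\ref{lemma:sbp} avoids the double decomposition. Write, with $\Delta_n=n^B$,
\[
f(n)=\int f(x)\,\Delta_n\phi\bigl(\Delta_n(x-n)\bigr)\,dx+\int\widehat f(\xi)\,e^{2\pi i n\xi}\bigl(1-\widehat\phi(\xi/\Delta_n)\bigr)\,d\xi,
\]
and bound $\sum_n|\cdot|$ of each half by duality against $\|f|x|^\alpha\|_p$ respectively $\|\widehat f|\xi|^\beta\|_q$, using items (1) and (3) of Lemma~\ref{lemma:sbp} with arbitrary unimodular $c_k$. The mollified sum is finite when $B<p(\alpha-\tfrac1{p'})$; the remainder sum needs $\sup_{|\varepsilon_k|=1}\|S_k\|_{L^{q'}(\mathbb T)}\lesssim k^{\max(1/q,1/2)}$ (Hausdorff--Young for $q\le2$, Parseval for $q\ge2$) and is finite when $B(\beta-\tfrac1{q'})>\max(\tfrac1q,\tfrac12)$. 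These two ranges for $B$ overlap exactly when $(\alpha-\tfrac1{p'})(\beta-\tfrac1{q'})>\max(\tfrac1{pq},\tfrac1{2p})$.
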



In this paper we complete the missing cases in Theorems \ref{theorem:kah} and \ref{theorem:kah2}.  
In particular, we show 
  in Theorem \ref{theorem:mainth} that for  $(\alpha - \frac{1}{p'})(\beta- \frac{1}{q'})=\frac{1}{pq}$,  $(p,q)\neq (1,1)$ if one side of \eqref{eqn:einstein}
 converges (to a finite or infinite value),  the other side must also converge to the same value. However, it is possible for neither side to converge.

See also \cite{benedetto} and \cite{Charly}
for various conditions for \eqref{eqn:einstein}
 to be true.

We will use the notation $F \lesssim G$  to mean that $F \le C G$ with a constant $C=C(\alpha,\beta, \gamma, p,q)$ that may change from
line to line; $F \approx G$ means that both $F \lesssim G$ and $G \lesssim F$ hold. Besides, $ \norm{F}_{p}=
\norm{F}_{L^p(\R)}.
$

\section{Main results}
Our main results are the following extensions of Theorems \ref{theorem:kah} and 
\ref{theorem:kah2}:
\begin{theorem} 
\label{theorem:mainth} Let
$1\le p,q\le\infty$ and
$\alpha-\frac{1}{p'}, \beta- \frac{1}{q'}>0$. Let $f$ be a locally integrable function on $\mathbb{R}$ such that 
\eqref{space} holds.

Then,
\begin{enumerate}
    \item if $(\alpha - \frac{1}{p'})(\beta- \frac{1}{q'})>\frac{1}{pq}$, $\lim_N P_N(f)= \lim_N P_N(\widehat{f})$;
    \item
    if $(\alpha - \frac{1}{p'})(\beta- \frac{1}{q'})=\frac{1}{pq}$ and $(p,q)= (1,1)$, $\lim_N P_N(f)= \lim_N P_N(\widehat{f})$;
    \item
    if $(\alpha - \frac{1}{p'})(\beta- \frac{1}{q'})=\frac{1}{pq}$ and $(p,q)\neq (1,1)$,  neither $\lim_N P_N(\widehat{f})$ nor $\lim_N P_N({f})$ need to exist, but if one does, then  $\lim_N P_N(f)= \lim_N P_N(\widehat{f})$;
    \item 
    if $(\alpha - \frac{1}{p'})(\beta- \frac{1}{q'})<\frac{1}{pq}$, there exists a function $f$ for which $P_N(f)$ and $P_N(\widehat{f})$ converge to different values.
\end{enumerate}
\end{theorem}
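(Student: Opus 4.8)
Statements (1) and (4) are precisely parts (1) and (2) of Theorem~\ref{theorem:kah}, so the new content is the critical line $(\alpha-\tfrac1{p'})(\beta-\tfrac1{q'})=\tfrac1{pq}$, i.e.\ items (2) and (3). Throughout write $\sigma=\alpha-\tfrac1{p'}>0$ and $\tau=\beta-\tfrac1{q'}>0$, so the critical identity reads $\sigma\tau=\tfrac1{pq}=(1-\tfrac1{p'})(1-\tfrac1{q'})$. Splitting at $|x|=1$ and applying H\"older, the assumptions $\alpha>\tfrac1{p'}$, $\beta>\tfrac1{q'}$ and local integrability upgrade \eqref{space} to $f,\widehat f\in L^1(\R)$; hence $f$ and $\widehat f$ are continuous, the periodizations $F_f(t)=\sum_k f(t+k)$ and $F_{\widehat f}$ lie in $L^1(\mathbb T)$ with Fourier coefficients $\widehat f(n)$ and $f(-n)$, and expanding the Dirichlet kernel $D_N(x)=\sum_{|k|\le N}e^{2\pi i k x}$ yields
\[
P_N(\widehat f)=\int_{\R}f(x)D_N(x)\,dx=S_N[F_f](0),\qquad P_N(f)=\int_{\R}\widehat f(\xi)D_N(\xi)\,d\xi=S_N[F_{\widehat f}](0),
\]
where $S_N$ is the symmetric partial sum of the Fourier series on $\mathbb T$. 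The set-up is invariant under $f\leftrightarrow\widehat f(-\cdot)$, $(\alpha,p)\leftrightarrow(\beta,q)$. Thus the implication in (3) amounts to the unconditional statement $P_N(f)-P_N(\widehat f)\to0$, and (2) reduces to: both partial sums converge, and they share a value.

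The engine is a double dyadic (Littlewood--Paley-type) decomposition of $f$ --- and symmetrically of $\widehat f$ --- in space \emph{and} frequency, $f=\sum_{j,l}f_{j,l}$ with $f_{j,l}$ smoothly localized to $|x|\approx 2^l$ and $|\xi|\approx 2^j$ and nonzero only when $2^{j+l}\gtrsim1$. The hypothesis gives $\norm{f_{j,l}}_p\lesssim 2^{-l\alpha}a_{j,l}$ and $\norm{\widehat{f_{j,l}}}_q\lesssim 2^{-j\beta}b_{j,l}$ with $\norm{(a_{j,l})}_{\ell^p}\lesssim\norm{f|x|^\alpha}_p$ and $\norm{(b_{j,l})}_{\ell^q}\lesssim\norm{\widehat f|\xi|^\beta}_q$, the two arrays being coupled through Bernstein/uncertainty inequalities that relate all norms of $f_{j,l}$ up to powers of $2^{j+l}$. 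Each $f_{j,l}$ is of Schwartz class, so PSF holds for it, and it contributes to $P_N(f)$ essentially only when $2^l\gtrsim N$ and to $P_N(\widehat f)$ essentially only when $2^j\gtrsim N$; therefore $P_N(f)-P_N(\widehat f)=\sum_{j,l}\bigl(\sum_{|n|>N}\widehat{f_{j,l}}(n)-\sum_{|k|>N}f_{j,l}(k)\bigr)$. A Plancherel--P\'olya sampling inequality bounds $\sum_{|k|>N}|f_{j,l}(k)|\lesssim 2^{-l\sigma}2^{j/p}a_{j,l}$ and $\sum_{|n|>N}|\widehat{f_{j,l}}(n)|\lesssim 2^{-j\tau}2^{l/q}b_{j,l}$, so
\[
\bigl|P_N(f)-P_N(\widehat f)\bigr|\ \lesssim\ \sum_{2^l\gtrsim N}\sum_j 2^{-l\sigma}2^{j/p}a_{j,l}\ +\ \sum_{2^j\gtrsim N}\sum_l 2^{-j\tau}2^{l/q}b_{j,l}.
\]
One absorbs the $2^{j/p}$, $2^{l/q}$ (the cost of under-sampling a band-limited function at the integers) into the conjugate decay carried by the complementary localization and applies H\"older in $(j,l)$ to both arrays at once; the full double series over all $(j,l)$ converges exactly under $\sigma\tau\ge\tfrac1{pq}$, with geometric slack when the inequality is strict (re-proving (1) with a rate $N^{-\varepsilon}$) and none at equality. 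In the critical case the series converges but without a rate, so $|P_N(f)-P_N(\widehat f)|$ is a tail of a convergent series and tends to $0$; this is (3). When moreover $(p,q)=(1,1)$, the same chain of inequalities is available in $\ell^1$ and bounds $\sum_k|f(k)|$ and $\sum_n|\widehat f(n)|$ by that finite double series, so both partial sums converge absolutely; combined with $P_N(f)-P_N(\widehat f)\to0$ this gives (2).

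For the remaining half of (3) --- neither side need converge --- and for (4) I would push the Kahane--Lemari\'e-Rieusset construction to the endpoint: take $f=\sum_m\varepsilon_m g_m$, a signed superposition of wave packets extremal for $\sigma\tau=\tfrac1{pq}$ (so it just barely lies in \eqref{space}), with signs $\varepsilon_m$ chosen by a Rudin--Shapiro/lacunary scheme so that the symmetric partial sums of $(f(k))_k$, and of $(\widehat f(n))_n$, oscillate. One uses $(p,q)\ne(1,1)$ precisely because the relevant sample sequences then lie in $\ell^q$ or $\ell^p$ but not $\ell^1$, leaving room for divergence; for (4) one drops below criticality and tunes the construction so the two limits differ.

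The main obstacle is the critical double sum: at equality there is no room to lose, so the crude Bernstein bounds are too costly and one must follow the joint space--frequency localization carefully --- in particular the diagonal $2^{j+l}\approx1$ and the passage between over- and under-sampling of a single piece --- exploiting the exact algebra $\tfrac1{pq}=(1-\tfrac1{p'})(1-\tfrac1{q'})$. A secondary point is to legitimize, for a merely locally integrable $f$, the rearrangement of the $(j,l)$-sum against the integrals defining $P_N(f)$, $P_N(\widehat f)$ and the convergence of the periodizations.
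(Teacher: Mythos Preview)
Your overall strategy is genuinely different from the paper's, and the gap lies exactly where you flag it: the critical double sum. You reduce the positive half of (3) to the unconditional claim $P_N(f)-P_N(\widehat f)\to 0$, but this is \emph{stronger} than what the paper proves and than what (3) requires. The paper establishes only the asymmetric relation
\[
\lim_{N\to\infty}\bigl(P_N(\widehat f)-P_{M}(f)\bigr)=0,\qquad M=\big\lceil N^{(\beta-1/q')/(1/q)}\big\rceil=\big\lceil N^{(1/p)/(\alpha-1/p')}\big\rceil,
\]
which already suffices for the conditional conclusion in (3); see Remark~\ref{remark}(4). Your displayed bound $\sum_{2^l\gtrsim N}\sum_j 2^{-l\sigma}2^{j/p}a_{j,l}+\cdots$ has an inner sum over all $j$ with the growing factor $2^{j/p}$; at criticality $\sigma\tau=\tfrac1{pq}$ there is no slack, and ``absorbing $2^{j/p}$ into the conjugate decay'' via a Bernstein-type coupling between the $\ell^p$ array $(a_{j,l})$ and the $\ell^q$ array $(b_{j,l})$ is precisely the step that needs a sharp argument --- one you do not supply and yourself identify as the main obstacle. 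Without it, the double series need not converge, and the tail need not vanish.

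For comparison, the paper avoids the phase-space decomposition entirely. It writes $\sum_{k}c_k f(k)=\langle f,F\rangle+\langle \widehat f,\widehat G\rangle$ with the explicit test functions $F(x)=\sum_k c_k\Delta_k\phi(\Delta_k(x-k))$ and $G=\sum_k c_k(\delta_k-\Delta_k\phi(\Delta_k(\cdot-k)))$ of Lemma~\ref{lemma:sbp}, choosing the variable bump widths $\Delta_k=1+k^{B}$ with $B=\frac{1/q}{\beta-1/q'}$. The weighted dual norms of $F$ and $\widehat G$ are then controlled via Abel summation and the Hardy--Littlewood inequality for \emph{monotone} Fourier coefficients (Lemma~\ref{lemma:Ngamma}); this is what produces the exponent $\gamma$ in $P_N(\widehat f)-P_{N^\gamma}(f)$ and handles the endpoint $(p,q)=(1,1)$ with $\gamma=0$. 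The counterexample for the negative half of (3) is likewise built from monotone coefficients $c_k=(k+1)^{-1-B}\log(k+2)^{-A}$, not a Rudin--Shapiro scheme, again so that Hardy--Littlewood controls $\|\widetilde S_k\|_{L^q(\mathbb T)}$; the oscillation comes from an outer signed lacunary superposition $\sum_j(-1)^jj^{-2}F_{N_j}$. Your Littlewood--Paley route might be salvageable, but as written it asserts rather than proves the decisive estimate, and it aims at a conclusion the paper neither states nor needs.
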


\begin{theorem} 
\label{theorem:extkah2} Let
$1\le p,q\le\infty$ and
$\alpha-\frac{1}{p'}, \beta- \frac{1}{q'}>0$. Let $f$ be a locally integrable function on $\mathbb{R}$ such that 
\eqref{space} holds.

Then,
\begin{enumerate}
    \item if $(\alpha - \frac{1}{p'})(\beta- \frac{1}{q'})>\max(\frac{1}{pq},\frac{1}{2p}) $,  $\lim_N P_N(|f|)<\infty$;
    \item
    if $(\alpha - \frac{1}{p'})(\beta- \frac{1}{q'})=\frac{1}{pq}$ and $(p,q)= (1,1)$, $\lim_N P_N(|f|)<\infty$;
    \item if
   $(\alpha - \frac{1}{p'})(\beta- \frac{1}{q'})\leq \max(\frac{1}{pq},\frac{1}{2p}) $ and $(p,q)\neq (1,1)$, there exists a function $f$ for which  $\lim_N P_N(|f|)=\infty$. 
\end{enumerate}
\end{theorem}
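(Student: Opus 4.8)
\emph{Part (1)} is exactly part (1) of Theorem~\ref{theorem:kah2}, so there is nothing to prove. For \emph{part (2)} ($p=q=1$, so $\alpha,\beta>0$ and $\alpha\beta=1$), the plan is first to note that the hypotheses force $f\in L^1$ (since $|f|\le|x|^{\alpha}|f|$ on $\{|x|\ge1\}$ and $f$ is locally integrable) and, applying this to the --- then continuous --- function $\widehat f$, also $\widehat f\in L^1$, so $f$ is continuous; moreover $\alpha\ge1$ gives $xf\in L^1$ and hence $\widehat f\in C^1$, with the dual statement when $\alpha\le1$. Using the symmetry of \eqref{eqn:einstein} under $f\leftrightarrow\widehat f$ one may assume $\beta\le1\le\alpha$. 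It then suffices to prove that $f$ lies in the Wiener amalgam space $W(C_0,\ell^1)$, i.e. $\sum_k\sup_{[k,k+1]}|f|<\infty$ (which gives $\sum_k|f(k)|<\infty$). I would split $\widehat f=\widehat f\,\chi+\widehat f\,(1-\chi)$ with $\chi$ a smooth cutoff equal to $1$ on $[-1,1]$: the low-frequency part $(\widehat f\,\chi)^{\vee}$ is band-limited and, since $\widehat f\in C^1$, both it and $x(\widehat f\,\chi)^{\vee}$ lie in $L^1$, which by Plancherel--Polya puts it in $W(C_0,\ell^1)$; the high-frequency part is treated by a Littlewood--Paley decomposition, playing the smoothness coming from $\widehat f|\xi|^{\beta}\in L^1$ against the decay coming from $|x|^{\alpha}f\in L^1$, with $\alpha\beta=1$ being precisely the relation that makes the resulting series over physical and frequency scales converge.

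For \emph{part (3)} one must, for each $(p,q)\neq(1,1)$ with $ab\le\max(\tfrac1{pq},\tfrac1{2p})$ (writing $a=\alpha-\tfrac1{p'}$, $b=\beta-\tfrac1{q'}$), construct $f$ with finite weighted norms but $\sum_k|f(k)|=\infty$. The basic building block is a ``spiky'' function at the dyadic physical scale $2^{n}$: $K_n$ bumps ($1\le K_n\le2^{n}$) of height $A_n$ and width $\delta_n\le1$ centred at distinct integers of $[2^{n},2^{n+1}]$, with signs to be chosen, and an extra modulation placing the frequency content of distinct scales into disjoint frequency blocks so that on the Fourier side the scales decouple. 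Then $\sum_{|k|\sim2^{n}}|f(k)|\approx K_nA_n$ and $\|f|x|^{\alpha}\|_{L^p(|x|\sim2^{n})}\approx 2^{n\alpha}K_n^{1/p}A_n\delta_n^{1/p}$, while the scale-$n$ contribution to $\|\widehat f|\xi|^{\beta}\|_q$ is controlled by a Dirichlet-type kernel if the signs are chosen coherently and by an $\ell^{2}$ square function (via Khintchine) if they are random. Choosing the signs optimally in each regime --- coherently when $\tfrac1{pq}$ is the larger term, randomly when $\tfrac1{2p}$ is --- and then optimising $A_n,\delta_n,K_n$ scale by scale against the two finiteness conditions, one gets that $\sum_k|f(k)|$ can be made to diverge exactly when $ab\le\max(\tfrac1{pq},\tfrac1{2p})$: in the strict inequality the objective series diverges geometrically, and on the critical hyperbola one inserts slowly decaying budgets $\mu_n,\nu_n$ (with $\sum_n\mu_n,\sum_n\nu_n<\infty$) for which the objective behaves like $\sum_n\mu_n^{c_1}\nu_n^{c_2}$ with exponents $c_1,c_2>0$ satisfying $c_1+c_2<1$ if and only if $(p,q)\neq(1,1)$; such a series can then be made divergent, consistently with part (2).

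The hardest step should be part (3) on the critical hyperbola $ab=\max(\tfrac1{pq},\tfrac1{2p})$: placing a single $f$ exactly on the boundary of both weighted spaces while keeping its restriction to $\mathbb Z$ non-summable forces the two interacting (physical and frequency) scales to be balanced to logarithmic precision, and one has to check carefully that different scales do not reinforce one another on the Fourier side --- this is what pins down the exact threshold and singles out $(p,q)=(1,1)$. A second point not present in Theorem~\ref{theorem:kah2} is the range $q\le2$, where the Khintchine randomisation is no longer optimal and one must instead use a coherent choice of signs (equivalently, by duality, move the spikes to the Fourier side), which is the source of the $\tfrac1{pq}$ term in the threshold.
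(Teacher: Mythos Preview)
Your outline is a plausible route, and the critical-case numerology you sketch ($c_1+c_2<1$ iff $(p,q)\neq(1,1)$) does check out. But the paper's proof is organised quite differently and is in each instance shorter.

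For part~(2) the paper does not use Littlewood--Paley or amalgam spaces. With $\Delta_n=n^{\alpha}$ and $\widehat\phi$ a smooth cutoff it writes
\[
\sum_{n\ge1}|f(n)|\le \sum_n\Bigl|f(n)-\int \Delta_n\phi(\Delta_n(x-n))f(x)\,dx\Bigr|+\sum_n\Bigl|\int \Delta_n\phi(\Delta_n(x-n))f(x)\,dx\Bigr|.
\]
The second sum is $\lesssim\|f(1+|x|)^{\alpha}\|_1$ since $\sum_n\Delta_n|\phi(\Delta_n(x-n))|\lesssim(1+|x|)^{\alpha}$; the first, after Plancherel, is $\le\int|\widehat f(\xi)|\sum_n|1-\widehat\phi(\xi/\Delta_n)|\,d\xi$, and $\sum_n|1-\widehat\phi(\xi/n^{\alpha})|\lesssim|\xi|^{1/\alpha}=|\xi|^{\beta}$. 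That is the whole argument. Your LP scheme would need a genuine interaction estimate between the physical decay $|x|^{\alpha}f\in L^1$ and the frequency pieces $P_jf$, which you have not supplied.

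For part~(3) with $q\le2$ and $(p,q)\neq(1,1)$ the paper constructs nothing: by Theorem~\ref{theorem:mainth}(3) there exists $f$ in the space for which $\lim_N P_N(f)$ fails to exist, and then $\sum_k|f(k)|=\infty$ automatically. Your direct coherent-sign construction is unnecessary in this range.

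For part~(3) with $q>2$ your idea (bumps plus Khintchine randomisation) matches the paper's, but the execution differs. The paper places bumps at \emph{all} integers $k\le N$ with continuously varying widths $\Delta_k^{-1}\approx k^{-B}(\log k)^{1-A}$, and bounds $\|\widehat{F_N}|\xi|^{\beta}\|_q$ via an Abel-summation identity (Lemma~\ref{lemma:sbp}(2)) that reduces everything to $L^q(\mathbb T)$ norms of trigonometric partial sums; randomisation (Lemma~\ref{lemma:random}) then replaces $L^q$ by $L^2$. This handles the overlapping frequency supports without any modulation, and the logarithmic tuning gives $P_N(|F_N|)\approx\log\log N$ against norm $\lesssim(\log\log N)^{1/q}$, settling the critical case directly. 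In your scheme the modulation to disjoint frequency blocks is the delicate step: shifting block $n$ to frequency $M_n$ multiplies the weighted norm by $M_n^{\beta}$, so you are forced to take $M_n\approx1/\delta_n$, which leaves adjacent blocks only marginally separated and the claimed decoupling not automatic.
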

The following remarks are in order:
\begin{remark}

\label{remark}

\begin{enumerate}
\item The assumption $\alpha-\frac{1}{p'}, \beta- \frac{1}{q'}>0$ in \eqref{space} is natural because it implies that it guarantees that 
\begin{equation*}
    \norm{f}_1+ \norm{\widehat{f}\,}_1\lesssim     \norm{f |x|^{\alpha}}_p + \norm{\widehat{f}|\xi|^{\beta}}_q.
\end{equation*}Thus, both $f$ and $\widehat{f}$ can be defined pointwise. In particular, we have the equivalence 
\begin{equation}
    \norm{f |x|^{\alpha}}_p + \norm{\widehat{f}|\xi|^{\beta}}_q \approx \norm{f (1+|x|)^{\alpha}}_p + \norm{\widehat{f}(1+|\xi|)^{\beta}}_q.
\end{equation}
Observe that for $\varepsilon_1, \varepsilon_2\geq 0$,
\begin{equation}
    \norm{f |x|^{\alpha}}_p + \norm{\widehat{f}|\xi|^{\beta}}_q \lesssim 
    \norm{f |x|^{\alpha+\varepsilon_1}}_p + \norm{\widehat{f}|\xi|^{\beta+\varepsilon_2}}_q.
\end{equation}

\item In fact, by analyzing the proof in \cite{Kahane1994} we see that if either $\alpha-\frac{1}{p'}$ or $ \beta- \frac{1}{q'}$ is not strictly positive, then both sides of \eqref{eqn:einstein}
 can converge to different values even if we  assume that $f,\widehat{f}\in L ^ 1$.
\item The space of functions $f$ which satisfy \eqref{space} is a Banach space, and the Schwartz functions are dense in it.
\item   In case (3) of Theorem \ref{theorem:mainth} we actually show that there exists  $\gamma>0$ depending on $(\alpha,\beta,p,q)$ such that $$\lim_{N\to\infty} (P_N(f)-P_{N^\gamma}(\widehat{f}\,))=0.$$ 

   \item In order that 
   $\lim_N P_N(|f|)+\lim_N P_N(|\widehat{f}|)
   <\infty$ for  
any locally integrable function    $f$ satisfying 
\eqref{space},
it is necessary and sufficient that 
$$(\alpha - \frac{1}{p'})(\beta- \frac{1}{q'})>\max(\frac{1}{pq},\frac{1}{2p},\frac{1}{2q}), \qquad (p,q)\ne (1,1)$$
and 
$$
\alpha \beta\ge 1,
 \qquad (p,q)=(1,1).$$
\end{enumerate}

\end{remark}
\begin{proof}[Proof of item (1)]
First note that as a consequence of the Hölder inequality we have $$\norm{\widehat{f} \mathbbm{1}_{|\xi|\geq 1/2}}_1 \lesssim   \norm{\widehat{f}|\xi|^{\beta}}_q.$$ Second, defining $\Delta^1 (g) (x)=g(x)-g(x+1)$ and $\Delta^ j=\Delta( \Delta^{j-1})$, we have
$$|\Delta^k (\widehat{f}\,)(\xi)|\leq \norm{f (1-e^{2 \pi i x})^k}_1 \lesssim \norm{f |x|^{\alpha}}_p, $$where $k= \lceil \alpha \rceil$. Thus, 
$$
\norm{{f}}_\infty \leq
\norm{\widehat{f}}_1 \lesssim \norm{\widehat{f} \mathbbm{1}_{|\xi|\geq 1/2}}_1 + \norm{\Delta^k(\widehat{f}\,) \mathbbm{1}_{|\xi|\leq 1/2}}_1
\lesssim  \norm{f |x|^{\alpha}}_p + \norm{\widehat{f}|\xi|^{\beta}}_q.$$
Similarly, we obtain the same estimate for 
$
\norm{\widehat{f}}_\infty$,
whence the result follows.
\end{proof}

\section{Auxiliary lemma}
In this section we obtain a lemma which provides us with useful estimates for the norms of smooth step functions and their Fourier transforms. 

For a sequence $(c_j)$, we denote
 $$S_k(\xi)=\sum_{j=0}^k c_j e^{-2 \pi i j \xi}\quad\mbox{ and} \quad\widetilde{S_k}(\xi)= \sum_{j=k} ^N c_j e^{-2 \pi i j \xi}.$$

\begin{lemma}
\label{lemma:sbp}
Let $1\leq p,q\leq \infty$. Let $\widehat{\phi}$ be a Schwartz, even, non-increasing function which is equal to $1$ on $[-1/2,1/2]$ and supported on $(-1,1)$. Let $(\Delta_k)_{k=-1}^\infty$ be an increasing sequence with $\Delta_{-1}=0<1\leq \Delta_{0}$ and $\Delta_k \approx \Delta_{k+1} $ for $k\geq 0$. For any integer $N\geq 0$ and any sequence $(c_k)_{k=0}^N$, 
we set 
\begin{eqnarray}F(x)=\sum_{k=0}^N c_k \Delta_k \phi\left((x-k)\Delta_k\right)\end{eqnarray} and \begin{eqnarray} G(x)=  \sum_{k=0}^N c_k\left(\delta(x-k) -\Delta_k \phi( (x- k)\Delta_k)\right).
\end{eqnarray}
Then,  for any $\alpha, \beta\in \mathbb{R}$ and $\nu >\frac{1}{q'}$,
we have
\begin{enumerate}
    \item $ \displaystyle \norm{F (|x|+1)^\alpha}_{p} \lesssim \left(\sum_{k=0}^N |c_k|^{p} (k+1)^{p\alpha}\Delta_k^{p-1} \right)^{\frac{1}{p}}; $

\item    $\displaystyle
    \norm{\widehat{F}(|\xi|+1)^\beta}_q \lesssim \left( \sum_{k=0}^N \Delta_k^{ \beta q} (\Delta_{k}-\Delta_{k-1}) \norm{\widetilde{S_k}}_{L^q(\mathbb{T})}^q \right)^{\frac{1}{q}};
$

\item $\displaystyle    \norm{\widehat{G}(|\xi|+1)^{-\nu }}_{q'} \lesssim \left( \sum_{k=0}^{N-1} \Delta_k^{ -\nu  q'} (\Delta_{k+1}-\Delta_{k}) \norm{{S}_k}_{L^{q'}(\mathbb{T})}^{q'} \right)^{\frac{1}{q'}}$\\
\phantom{x}\hspace{26mm}
$+ \Delta_N^{-\nu +\frac{1}{q'}} \norm{{S}_N}_{L^{q'}(\mathbb{T})}.$
\item  for any $M\in \mathbb{N}$ there exists a constant $K_M$ such that $$|F(n)-\phi(0)\Delta_n c_n|\leq K_M \norm{c}_\infty (n^{-1} + \Delta_{\lfloor n/2\rfloor}^{-1})^M$$
for any integer $n.$

\end{enumerate}

\end{lemma}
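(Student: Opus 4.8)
The plan is to establish the four estimates separately, (2) and (3) being the heart of the matter.

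\emph{Part (1).} I would begin from the pointwise bound: since $\phi$ is Schwartz, $|\phi(y)|\lesssim_M (1+|y|)^{-M}$, hence $|F(x)|\lesssim_M \sum_{k=0}^N |c_k|\Delta_k(1+\Delta_k|x-k|)^{-M}$. Writing $(1+|x|)^{\alpha}\lesssim_{\alpha}(1+k)^{\alpha}(1+|x-k|)^{|\alpha|}$ and using $\Delta_k\ge 1$ to absorb the extra power $(1+|x-k|)^{|\alpha|}\le(1+\Delta_k|x-k|)^{|\alpha|}$ into the decay, the claim reduces to the almost-orthogonality statement $\norm{\sum_k g_k}_p\lesssim\bigl(\sum_k\norm{g_k}_p^{\,p}\bigr)^{1/p}$ for the bumps $g_k(x)=|c_k|(1+k)^{\alpha}\Delta_k(1+\Delta_k|x-k|)^{-M}$, whose norms satisfy $\norm{g_k}_p^{\,p}\approx |c_k|^{p}(1+k)^{p\alpha}\Delta_k^{\,p-1}$. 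For $p<\infty$ this follows by inserting $1=(1+|x-k|)^{\varepsilon}(1+|x-k|)^{-\varepsilon}$ with $\varepsilon p'>1$, applying Hölder in $k$ (the factors $(1+|x-k|)^{-\varepsilon}$ summing to a bounded function of $x$), and then integrating $(1+\Delta_k|x-k|)^{-Mp+\varepsilon p}$ in $x$, which produces the gain $\Delta_k^{-1}$; the case $p=\infty$ is the pointwise bound itself.

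\emph{Part (2).} The starting point is the identity $\widehat{F}(\xi)=\sum_{k=0}^N c_k e^{-2\pi i k\xi}\,\widehat{\phi}(\xi/\Delta_k)$. The multipliers $a_k(\xi):=\widehat{\phi}(\xi/\Delta_k)$ are nondecreasing in $k$ (as $\Delta_k$ increases and $\widehat\phi$ is nonincreasing in $|\cdot|$), so summation by parts (with the convention $a_{-1}\equiv 0$ and using $\widetilde{S}_{N+1}=0$) yields $\widehat F=\sum_{k=0}^N b_k\widetilde S_k$ with $b_k=a_k-a_{k-1}\ge 0$. The family $(b_k)$ has three properties I would extract: $\sum_k b_k=\widehat\phi(\xi/\Delta_N)\le 1$; $\operatorname{supp}b_k\subset\{\Delta_{k-1}/2\le|\xi|\le\Delta_k\}$; and, most importantly, $\norm{b_k}_\infty\lesssim(\Delta_k-\Delta_{k-1})/\Delta_k$ by the mean value theorem (using $|\xi|\lesssim\Delta_k$ and $\Delta_{k-1}\approx\Delta_k$ on the support). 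Since $\sum_k b_k\le 1$, convexity of $t\mapsto t^q$ gives $|\widehat F|^q\le\bigl(\sum_k b_k|\widetilde S_k|\bigr)^q\le\sum_k b_k|\widetilde S_k|^q$ pointwise; multiplying by $(1+|\xi|)^{\beta q}$, which is $\approx\Delta_k^{\beta q}$ on $\operatorname{supp}b_k$, integrating, and using that $\widetilde S_k$ is $1$-periodic so that $\int_{\operatorname{supp}b_k}|\widetilde S_k|^q\lesssim\Delta_k\norm{\widetilde S_k}_{L^q(\mathbb T)}^q$, the $k$-th term is bounded by $\Delta_k^{\beta q}\norm{b_k}_\infty\Delta_k\norm{\widetilde S_k}_{L^q(\mathbb T)}^q\lesssim\Delta_k^{\beta q}(\Delta_k-\Delta_{k-1})\norm{\widetilde S_k}_{L^q(\mathbb T)}^q$. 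Summing in $k$ gives the bound; $q=\infty$ follows directly from $|\widehat F|\le\sum_k b_k|\widetilde S_k|$, $\sum_k b_k\le 1$, and $(1+|\xi|)^{\beta}\lesssim\Delta_k^{\beta}$ on $\operatorname{supp}b_k$.

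\emph{Parts (3) and (4).} For (3) the same mechanism applies to $\widehat G(\xi)=\sum_{k=0}^N c_k e^{-2\pi i k\xi}(1-\widehat\phi(\xi/\Delta_k))$: the multipliers $d_k(\xi):=1-\widehat\phi(\xi/\Delta_k)$ are now nonincreasing in $k$, and summation by parts with partial sums from the left ($S_{-1}\equiv 0$) gives $\widehat G=d_N S_N+\sum_{k=0}^{N-1}(d_k-d_{k+1})S_k$. The differences $d_k-d_{k+1}\ge 0$ have the same three properties as the $b_k$ above (support in $\{\Delta_k/2\le|\xi|\le\Delta_{k+1}\}$, sup-norm $\lesssim(\Delta_{k+1}-\Delta_k)/\Delta_k$, and $\sum_k(d_k-d_{k+1})\le 1$), so the sum over $k\le N-1$ is estimated exactly as in (2), with weight $(1+|\xi|)^{-\nu q'}\lesssim\Delta_k^{-\nu q'}$ on the support, producing the first term on the right-hand side. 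The boundary term $\norm{d_N S_N(1+|\xi|)^{-\nu}}_{q'}$ is handled separately: $d_N$ is supported in $|\xi|\ge\Delta_N/2$ and $S_N$ is $1$-periodic, so this is $\lesssim\bigl(\sum_{|m|\gtrsim\Delta_N}(1+|m|)^{-\nu q'}\bigr)^{1/q'}\norm{S_N}_{L^{q'}(\mathbb T)}$, and the tail sum is $\approx\Delta_N^{1-\nu q'}$ precisely because $\nu>1/q'$. For (4), the $k=n$ summand of $F(n)=\sum_k c_k\Delta_k\phi((n-k)\Delta_k)$ equals $\phi(0)\Delta_n c_n$, so one must bound $\sum_{k\ne n}c_k\Delta_k\phi((n-k)\Delta_k)$; with $|\phi(y)|\lesssim_L(1+|y|)^{-L}$ and $|n-k|\ge 1$, each summand is $\lesssim_L\norm{c}_\infty\Delta_k^{1-L}|n-k|^{-L}$. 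Splitting according to whether $k\ge n/2$ — where $\Delta_k\ge\Delta_{\lfloor n/2\rfloor}$ makes $\Delta_k^{1-L}\le\Delta_{\lfloor n/2\rfloor}^{-(L-1)}$ while $\sum_{k\ne n}|n-k|^{-L}\lesssim 1$ — or $k<n/2$ — where $|n-k|>n/2$ makes $|n-k|^{-L}\lesssim n^{-L}$, $\Delta_k^{1-L}\le 1$, and there are at most $n$ terms — gives $|F(n)-\phi(0)\Delta_n c_n|\lesssim_L\norm{c}_\infty\bigl(\Delta_{\lfloor n/2\rfloor}^{-(L-1)}+n^{-(L-1)}\bigr)$, and taking $L=M+1$ together with $a^M+b^M\approx(a+b)^M$ concludes.

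\emph{Main difficulty.} I expect the delicate point to be extracting the sharp factor $\Delta_k-\Delta_{k-1}$ rather than the crude $\Delta_k$ in (2) and (3): a plain triangle inequality over the pieces $b_k\widetilde S_k$ loses a factor $\sum_k(\Delta_k-\Delta_{k-1})/\Delta_k\approx\log\Delta_N$, so one genuinely needs both the mean value bound $\norm{b_k}_\infty\lesssim(\Delta_k-\Delta_{k-1})/\Delta_k$ and the convexity step that uses $\sum_k b_k\le 1$ to pass the power $q$ inside the sum. Beyond that, the bookkeeping of the weight $(1+|\xi|)^{\pm}$ on the overlapping annular supports — and in particular the treatment of the lowest-frequency piece $b_0$, whose support reaches $\xi=0$ — is where care is most needed.
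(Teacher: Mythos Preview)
Your proposal is correct and follows essentially the same route as the paper: Abel summation to bring in $S_k$ and $\widetilde{S_k}$, Jensen/H\"older with $\sum_k b_k\le 1$ to move the power $q$ inside, the mean-value and support bounds on $b_k$ to extract the factor $(\Delta_k-\Delta_{k-1})$, and periodicity of the partial sums to reduce to $L^q(\mathbb T)$. The only cosmetic differences are that the paper periodizes first and then bounds the lattice sum $\sup_{\xi}\sum_j|\xi+j|^{\beta q}|b_k(\xi+j)|$ rather than using $\|b_k\|_\infty\cdot|\operatorname{supp}b_k|$ directly, applies H\"older in (1) with the weight split $|\phi|=|\phi|^{1/p}\cdot|\phi|^{1/p'}$ instead of your auxiliary $(1+|x-k|)^{\pm\varepsilon}$, and in (4) splits into three ranges of $k$ (below $n/2$, between $n/2$ and $2n$, above $2n$) rather than two.
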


\begin{proof}
   To prove the first item, observe that, by Hölder's inequality, $$|F(x)| \leq  \left(\sum_{k=0}^N |c_k|^{p} \Delta_k^{p} |\phi( (x- k)\Delta_k)|\right)^{\frac{1}{p}} \left(\sum_{k=0}^N  |\phi( (x- k)\Delta_k)|\right)^{\frac{1}{p'}}.$$  The rapid decay of $\phi$ and the assumption $\Delta_k \geq 1$ imply that, for any $x,$ $$\sum_{k=0}^\infty  |\phi( (x- k)\Delta_k)|\lesssim 1.$$ Hence,
   $$\norm{F (|x|+1)^\alpha}_{p} \lesssim \left(\sum_{k=0}^N |c_k|^{p} \Delta_k^{p} \big\|{(|x|+1)^\alpha \phi( (x- k)\Delta_k)}\big\|_p ^p\right)^{\frac{1}{p}}.$$ Finally, once again because of the rapid decay of $\phi$, we conclude the proof of the first item by noting that $$\Delta_k^{\frac{1}{p}}\norm{(|x|+1)^\alpha \phi( (x- k)\Delta_k)}_p\lesssim (k + 1)^\alpha.$$

   To prove the second item, using the Abel transformation, we obtain that
   \begin{eqnarray*}
       \widehat{F}(\xi) &=& \sum_{k=0}^{N} c_k e^{-2 \pi i k \xi} \widehat{\phi}(\xi /\Delta_k)\\
       &=&\widetilde{S_0}(\xi)\widehat{\phi}(\xi/\Delta_0)+ \sum_{k=1}^N \widetilde{S_k} (\xi)( \widehat{\phi}(\xi/\Delta_k) - \widehat{\phi}(\xi/\Delta_{k-1}))\\
       &\leq & |\widetilde{S_0}(\xi)|\widehat{\phi}(\xi/\Delta_0)+ \left(\sum_{k=1}^N |\widetilde{S_k} (\xi)|^q |\widehat{\phi}(\xi/\Delta_k) - \widehat{\phi}(\xi/\Delta_{k-1}))|\right)^\frac{1}{q},
   \end{eqnarray*}
   where in the last inequality we used Hölder's inequality and the monotonicity of $\widehat{\phi}$. Next, because of the periodicity of $\widetilde{S_k}$, we have
   \begin{multline*}
       \norm{\widehat{F} (|\xi|+1)^\beta }_{q}\lesssim \Delta_0^{\beta+\frac1q} \norm{\widetilde{S_0}}_{L^q(\mathbb{T})} \quad\\ \qquad\quad+ \left(
   \sum_{k=1}^{N}  \norm{\widetilde{S_k}}^q _{L^q(\mathbb{T})}  \sup_{0 \leq \xi \leq 1}\left(\sum_{j\in \mathbb{Z}}   |\xi+j| ^{ \beta q}  \left| \widehat{\phi}\left(\frac{\xi+j}{\Delta_k}\right) - \widehat{\phi}\left(\frac{\xi+j}{\Delta_{k-1}}\right)\right |  \right) \right)^{\frac1q}.
   \end{multline*}
   Finally, by the support assumption, the inequality $|\widehat{\phi}(x)-\widehat{\phi}(y)|\lesssim |x-y|$ and the property $\Delta_k \approx \Delta_{k+1},$ we conclude that for $k\geq 1$
\begin{align*}
   &\sum_{j\in \mathbb{Z}}   |\xi+j| ^{ \beta q}  \left| \widehat{\phi}\left(\frac{\xi+j}{\Delta_k}\right) - \widehat{\phi}\left(\frac{\xi+j}{\Delta_{k-1}}\right)\right | \\&\lesssim\sum_{\Delta_{k-1}/2\leq j +\xi \leq \Delta_k} (j+\xi)^{\beta q+1} (\Delta_{k-1}^{-1} -\Delta_{k}^{-1}) \\&\lesssim \Delta_k^{q \beta +2 }(\Delta_{k-1}^{-1} -\Delta_{k}^{-1}) \\ &\approx  \Delta_k^{q \beta  }(\Delta_{k} -\Delta_{k-1}).
    \end{align*}
  The third item follows in a similar way. Indeed, by  the Abel transformation,
   
   \begin{align*}
       \widehat{G}(\xi) &= \sum_{k=0}^{N} c_k e^{-2 \pi i k \xi} (1-\widehat{\phi}(\xi /\Delta_k))\\&= {S}_N(\xi)(1-\widehat{\phi}(\xi/\Delta_N))+ \sum_{k=0}^{N-1} {S}_k (\xi)( \widehat{\phi}(\xi/\Delta_{k+1}) - \widehat{\phi}(\xi/\Delta_{k}))\\
       &\leq  |{S}_N(\xi)|(1-\widehat{\phi}(\xi/\Delta_N))+ \left(\sum_{k=0}^{N-1} |{S}_k (\xi)|^{q'} |\widehat{\phi}(\xi/\Delta_{k+1}) - \widehat{\phi}(\xi/\Delta_{k})|\right)^\frac{1}{q'}.
   \end{align*}
Using the periodicity of ${S}_k$,
 \begin{multline}
 \label{eq:Kgaug}
      \norm{\widehat{G} (|\xi|+1)^{-\nu } }_{q'}\lesssim \Delta_N^{-\nu +\frac{1}{q'}} \norm{{S}_N}_{L^{q'}(\mathbb{T})} 
     \\ + \left(
   \sum_{k=0}^{N-1}  \norm{{S}_k}^{q'} _{L^{q'}(\mathbb{T})}  \sup_{0 \leq \xi \leq 1}\left(\sum_{j\in \mathbb{Z}}   |\xi+j| ^{ -\nu  q'}  \left| \widehat{\phi}\left(\frac{\xi+j}{\Delta_{k+1}}\right) - \widehat{\phi}\left(\frac{\xi+j}{\Delta_{k}}\right)\right |  \right) \right)^{\frac{1}{q'}}.
    \end{multline} 
   Finally, repeating the previous arguments we conclude that
\begin{align*}
&
   \sum_{j\in \mathbb{Z}}   |\xi+j| ^{ -\nu  q'}  \left| \widehat{\phi}\left(\frac{\xi+j}{\Delta_{k+1}}\right) - \widehat{\phi}\left(\frac{\xi+j}{\Delta_{k}}\right)\right |\\ &\lesssim \sum_{\Delta_{k}/2\leq \xi +j \leq \Delta_{k+1}} (\xi +j)^{-\nu  q'+1} (\Delta_{k}^{-1} -\Delta_{k+1}^{-1}) \\&\lesssim \Delta_k^{-q' \nu  +2 }(\Delta_{k}^{-1} -\Delta_{k+1}^{-1}) \approx  \Delta_k^{-q' \nu   }(\Delta_{k+1} -\Delta_{k}).
    \end{align*}

    To prove the fourth item, we use  the estimate $|\phi(x)|\lesssim (1+|x|)^{-M}$ to  deduce that

    \begin{align*}
    |F(n)&-c_n\Delta_n\phi(0)|\leq \sum_{k\neq n} \Delta_k |c_k \phi(\Delta_k (n-k))| \lesssim \norm{c}_\infty  \sum_{k\neq n} \frac{\Delta_k}{(\Delta_k|k-n|)^{M}}\\
    &
    \lesssim
    \norm{c}_\infty\left(\sum_{k=0}^{\frac{n}{2}} \frac{\Delta_k}{(\Delta_k n )^{M}} + \Delta_{\lfloor n/2\rfloor} ^{1-M} \sum_{k\neq n} \frac{1}{|k-n|^{M}} +  \sum_{k=2n}^{\infty} \frac{\Delta_k}{(\Delta_k k)^{M}}\right)\\
   &\lesssim \norm{c}_\infty\left( n^{1-M} +\Delta_{\lfloor n/2 \rfloor}^{1-M}\right).
    \end{align*}
\end{proof}

\section{Proofs of main results}
\subsection{Proof of Theorem \ref{theorem:mainth}}
It is a standard approach to establish the convergence of $\lim_N P_N(f)$ for any function satisfying $\norm{f |x|^{\alpha}}_p + \norm{\widehat{f}|\xi|^{\beta}}_q<\infty$ through
the uniform boundedness of $ P_N(f)$ with respect to
 $N$. The following lemma gives sufficient conditions under which $P_N$ and its modifications are uniformly bounded: 
\begin{lemma}
\label{lemma:Ngamma}
Let
$1\le p,q<\infty$  and  $\alpha - 1/p' ,\beta- 1/q'>0$. For $\gamma\ge 0$, we set $$Q_N(f)=\frac{1}{N^\gamma}\sum_{k=0}^N  c_{k} f(k)$$ with  a positive non-decreasing sequence $(c_k)_{k=0}^\infty$ satisfying  $c_k\lesssim k^\gamma$.

 Then the inequality 
\begin{equation}
    \label{eq:QN}
    |Q_N(f)|\lesssim \norm{f |x|^{\alpha}}_p + \norm{\widehat{f}|\xi|^{\beta}}_q
\end{equation}
holds uniformly on $N$ in the following cases:
\begin{enumerate}

     \item  if $(\alpha - 1/p') (\beta- 1/q')=1/(pq)$ and $\gamma>0$;
      \item if $p=q=1$, $\alpha \beta =1 $ and $\gamma \geq  0$.
\end{enumerate}

\end{lemma}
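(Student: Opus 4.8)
The plan is to realize $Q_N$ simultaneously as a pairing against $f$ and against $\widehat f$, exploiting the splitting of the atomic measure $\mu_N:=\sum_{k=0}^N c_k\,\delta(\cdot-k)$ provided by Lemma \ref{lemma:sbp}. I would fix the bump $\widehat\phi$ as in that lemma and a sequence $(\Delta_k)$ (to be chosen below), write $\mu_N=F(x)\,dx+G$ with $F,G$ as there, and use that $f,\widehat f\in L^1\cap C_0$ (Remark \ref{remark}(1)) to get, by Parseval applied to the finite measure $G$,
\[
\sum_{k=0}^N c_k f(k)=\int_{\R} f(x)F(x)\,dx+\int_{\R}\widehat f(\xi)\,\widehat G(-\xi)\,d\xi .
\]
Hölder's inequality together with the weight equivalence $\norm{g\,(1+|x|)^{s}}_r\approx\norm{g\,|x|^{s}}_r$ (Remark \ref{remark}(1)) then yields
\[
\Big|\sum_{k=0}^N c_k f(k)\Big|\lesssim \norm{f\,|x|^\alpha}_p\,\big\|F(1+|x|)^{-\alpha}\big\|_{p'}+\norm{\widehat f\,|\xi|^\beta}_q\,\big\|\widehat G(1+|\xi|)^{-\beta}\big\|_{q'}.
\]
Since $\beta>\tfrac1{q'}$, the last factor is controlled by Lemma \ref{lemma:sbp}(3) with $\nu=\beta$, and the first by Lemma \ref{lemma:sbp}(1) with the real exponent $-\alpha$ in place of $\alpha$ and index $p'$ in place of $p$; when $p'=\infty$ or $q'=\infty$ these bounds are read with the $\ell^{p'}$- or $\ell^{q'}$-sums replaced by suprema, as their proofs allow. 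So everything reduces to choosing $(\Delta_k)$ with $\big\|F(1+|x|)^{-\alpha}\big\|_{p'}+\big\|\widehat G(1+|\xi|)^{-\beta}\big\|_{q'}\lesssim N^\gamma$, after which dividing by $N^\gamma$ gives \eqref{eq:QN}.

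Next I would make the choice. Writing $a=\alpha-\tfrac1{p'}>0$ and $b=\beta-\tfrac1{q'}>0$, I would set $\Delta_{-1}=0$ and $\Delta_k=1+(1+k)^{pa}$ for $k\ge0$; this is admissible in Lemma \ref{lemma:sbp}, and the borderline condition $ab=\tfrac1{pq}$ is exactly the identity $pa=\tfrac1{qb}$, which is what makes the two estimates match. For the $F$-term, Lemma \ref{lemma:sbp}(1) and $c_k\lesssim(1+k)^\gamma$ give that the $k$-th summand of $\|F(1+|x|)^{-\alpha}\|_{p'}^{p'}$ is $\lesssim(1+k)^{p'\gamma-p'\alpha}\Delta_k^{p'-1}\approx(1+k)^{p'\gamma-1}$ (using $p'-1=p'/p$ and $ap'=\alpha p'-1$), so $\|F(1+|x|)^{-\alpha}\|_{p'}\lesssim N^\gamma$ provided $\gamma>0$. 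For the $G$-term I would first note, by Abel summation and the monotonicity of $(c_k)$ together with the Dirichlet-kernel bound $\big|\sum_{l=0}^j e^{-2\pi il\xi}\big|\lesssim\min\!\big(j+1,\operatorname{dist}(\xi,\Z)^{-1}\big)$, that $\norm{S_k}_{L^{q'}(\mathbb T)}\lesssim c_k(1+k)^{1/q}$; inserting this and $c_k\lesssim(1+k)^\gamma$ into Lemma \ref{lemma:sbp}(3) makes the $k$-th summand $\Delta_k^{-\beta q'}(\Delta_{k+1}-\Delta_k)\norm{S_k}_{L^{q'}(\mathbb T)}^{q'}\approx(1+k)^{\gamma q'-1}$ (here one uses $pa(1-\beta q')=-q'/q$, which is the borderline identity again) and the boundary term $\Delta_N^{-\beta+1/q'}\norm{S_N}_{L^{q'}(\mathbb T)}\lesssim N^{\gamma}$, giving $\|\widehat G(1+|\xi|)^{-\beta}\|_{q'}\lesssim N^\gamma$. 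This settles case (1).

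For case (2), $p=q=1$ and $\alpha\beta=1$, so $p'=q'=\infty$: the same choice $\Delta_k=1+(1+k)^{\alpha}$ works, now with $\norm{S_k}_{L^\infty(\mathbb T)}=\sum_{j=0}^k c_j\lesssim(1+k)^{1+\gamma}$ and with the two quantities above replaced by the suprema $\max_{k\le N}c_k(1+k)^{-\alpha}\Delta_k\lesssim N^\gamma$ and $\max_{k\le N}\Delta_k^{-\beta}\norm{S_k}_{L^\infty(\mathbb T)}\lesssim N^\gamma$ (the second using $\alpha\beta=1$); when $\gamma=0$ these are $\lesssim1$ and division by $N^\gamma=1$ concludes. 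The main obstacle, and the reason the hypotheses are what they are, is that the argument is borderline, with no slack anywhere: the condition $ab=\tfrac1{pq}$ forces the exponent of $\Delta_k$ to be the single value $pa=\tfrac1{qb}$ --- a smaller exponent breaks the $G$-estimate, a larger one the $F$-estimate --- and with that value the two relevant $k$-sums both have exponent exactly $\gamma r-1$ (with $r=p'$, resp.\ $q'$), which sums to order $N^{\gamma r}$ \emph{only} because $\gamma>0$. When $\gamma=0$ this fails unless $p=q=1$, since then $p'=q'=\infty$ and those quantities are suprema rather than sums, so the logarithmic obstruction disappears; this is precisely the dichotomy in cases (1)--(2). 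The one place I must be careful not to lose anything is the bound $\norm{S_k}_{L^{q'}(\mathbb T)}\lesssim c_k(1+k)^{1/q}$: a weaker estimate here would not close the endpoint, but this elementary one is sharp (it is already saturated at $\xi=0$).
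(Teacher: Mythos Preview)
Your proof is correct and follows the same strategy as the paper's: split $\sum_{k\le N}c_k\delta_k$ into $F+G$ via Lemma~\ref{lemma:sbp} with $\Delta_k\approx (1+k)^{B}$, $B=\tfrac{1/q}{\beta-1/q'}=p(\alpha-\tfrac{1}{p'})$, apply H\"older, and show that $\|F(1+|x|)^{-\alpha}\|_{p'}+\|\widehat G(1+|\xi|)^{-\beta}\|_{q'}\lesssim N^\gamma$ using items (1) and (3) of that lemma. The one substantive difference is in the bound for $\|S_k\|_{L^{q'}(\mathbb T)}$: the paper appeals to the Hardy--Littlewood inequality for monotone Fourier coefficients (Zygmund, Ch.~XII) to get $\int_0^1|S_k|^{q'}\lesssim\sum_j c_j^{q'}(k-j)^{q'-2}\lesssim k^{q'\gamma+q'-1}$, whereas your Abel-summation argument, using only monotonicity of $(c_j)$ and $\|D_j\|_{L^{q'}(\mathbb T)}\lesssim j^{1/q}$, yields the same estimate $\|S_k\|_{q'}\lesssim c_k\,k^{1/q}$ by more elementary means; both then feed identically into Lemma~\ref{lemma:sbp}(3).
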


\begin{proof}

    Let $\widehat{\phi}$ be a smooth, non-increasing function supported on $(-1,1)$ and equal to $1$ on $[-1/2,1/2]$. Set
    $$\Delta_k = 1+ k^B, \qquad B=\frac{1/q}{\beta- 1/q'}.$$

   In the notation of Lemma \ref{lemma:sbp} and
    by Plancherel and Hölder's inequalities, we have
    \begin{align*}
    Q_N(f)&=N^{-\gamma}\left( \int_{\mathbb{R}} f \overline{F} + \int_{\mathbb{R}} \widehat{f} \overline{\widehat{G}}\right) \\
    &\lesssim N^{-\gamma} \left(\norm{f |x|^{\alpha}}_p + \norm{\widehat{f}|\xi|^{\beta}}_q\right) \left(\norm{F (1+|x|)^{-\alpha}}_{p'}+ \norm{\widehat{G}(1+|\xi|)^{-\beta}}_{q'}\right) ,\end{align*} where in the last estimate we also used item (1) of Remark \ref{remark}. Thus, the result follows if we prove that 
    $$\norm{F (1+|x|)^{-\alpha}}_{p'}+ \norm{\widehat{G}(1+|\xi|)^{-\beta}}_{q'} \lesssim N^{\gamma}.$$
To estimate $\norm{F (1+|x|)^{-\alpha}}_{p'}$ we use item (1) in Lemma \ref{lemma:sbp}. Since $c_k\lesssim k^\gamma$ and $(\alpha - 1/p') (\beta- 1/q')= 1/(pq)$, we have
      \begin{eqnarray*}
      \norm{F (1+|x|)^{-\alpha}}_{p'} \lesssim N^\gamma.
  \end{eqnarray*}
 Next we deal with the quantity $\norm{\widehat{G}(1+|\xi|)^{-\beta}}_{q'}$ for $q>1$. Since the sequence  $(c_k)_k$ is non-decreasing, we can apply the Hardy-Littlewood inequality for monotone Fourier coefficients (see Chapter XII in \cite{zygmund}) to obtain that  $$\int_0 ^1 |{S}_k|^{q'} \lesssim \sum_{j=1}^k c_j^{q'} (k-j)^{q'-2} \lesssim   k^{q' \gamma +q'-1}.$$ If $q=1$, it is clear that
 $\norm{S_k}_\infty \lesssim k^{\gamma +1}$. Therefore, for any $q\geq 1$ by using item (3) in Lemma \ref{lemma:sbp} we deduce that
   $$\norm{\widehat{G}(1+|\xi|)^{-\beta}}_{q'} \lesssim N^{\gamma}.$$
   
   The proof is now complete.
 \end{proof}

\begin{remark}\label{remark1}
   If $(\alpha - 1/p') (\beta- 1/q')>1/(pq)$ and $\gamma = 0$, the same proof with $\frac{\frac{1}{q}}{\beta - \frac{1}{q'}}<B<\frac{\alpha - \frac{1}{p'}}{\frac{1}{p}}$ shows that \eqref{eq:QN} holds true. This yields  an alternative proof of item (1) in Theorems \ref{theorem:kah} and \ref{theorem:mainth}.
\end{remark}
We are now in a position to prove our main results.

\begin{proof}[Proof of Theorem \ref{theorem:mainth}]
 The first and the forth  statements are contained in Theorem \ref{theorem:kah}, see also  Remark 
 \ref{remark1} for an alternative proof of the first one. The second item follows from item (2) in Lemma \ref{lemma:Ngamma} with $\gamma=0$ by a standard density argument (see  also   Remark \ref{remark} (3)). We now prove the positive part of the third item.

    Let $\widehat{\phi}$ be a smooth, even, non-increasing function supported on $(-1,1)$ such that $\widehat{\phi}(\xi)\equiv 1$ for $|\xi|\leq 1/2$. Set
 $M = \lceil N^{\frac{ \beta- 1/q'}{1/q}} \rceil = \lceil N^{\frac{1/p}{\alpha - 1/p'}}\rceil $ and let $$P_{\Phi,N}(\widehat{f}):=\sum_{k\in \mathbb{Z}} \widehat{f}(k) \widehat{\phi}(k/N).$$
We show that both \begin{equation}
\label{eq:regu} \left |P_{\Phi,N}(\widehat{f}) -P_M(f) \right| \lesssim \norm{f |x|^{\alpha}}_p + \norm{\widehat{f} |\xi|^{\beta}}_q 
\end{equation}
and \begin{equation}
\label{eq:regu2} \left |P_{\Phi,N}(\widehat{f}) -P_N(\widehat{f}) \right| \lesssim \norm{f |x|^{\alpha}}_p + \norm{\widehat{f} |\xi|^{\beta}}_q \end{equation} hold independently of $N$. By the density of the Schwartz functions, for which \eqref{eqn:einstein}
 clearly holds,
 one has
 \begin{eqnarray*}
\label{eq:phi}
    \lim _N \big(P_{N}(\widehat{f}\,) -P_M({f}) \big)=0
\end{eqnarray*}
for any $f$ satisfying \eqref{space}.

First, to prove inequality \eqref{eq:regu}, we observe that by Parseval and Hölder's inequality, 

\begin{align*}
    \left |P_M(f)-\int_{\mathbb{R}} f(x) \sum_{|k|\leq M} N \phi\left( (x-k)N\right) dx\right| &=
\left |\int_{\mathbb{R}} \widehat{f}(\xi) D_M(\xi)\left(1-  \widehat{\phi}(\xi/N) \right) d\xi \right| \\
&\lesssim\norm{\widehat{f} \xi ^{\beta}}_{q}  M^{\frac{1}{q}}N^{-\beta+ \frac{1}{q'}} \approx \norm{\widehat{f} \xi ^{\beta}}_{q} .
\end{align*} Here for 
the Dirichlet kernel 
$D_M(\xi)=\sum_{|k|\leq M}e^{2 \pi i k \xi}$ we have used the estimate
$$ \left(\int_{N/2}^\infty |D_M(\xi)|^{q'} |\xi|^{-q' \beta} d \xi \right)^{\frac{1}{q'}} \lesssim M^{\frac{1}{q}} N ^{-\beta+ \frac{1}{q'}}.$$
A further application of Hölder's inequality shows that
$$ \left |\int_\mathbb{R} f(x) \sum_{|k|\geq M} N \phi( (x-k)N) dx \right| \lesssim  \norm{f (1+|x|)^{\alpha}}_{p} N^{\frac{1}{p}} M^{-\alpha + \frac1{p'}} \approx \norm{f (1+|x|)^{\alpha}}_{p}.$$
Hence, by the Poisson summation formula 
$$\sum_{k\in \mathbb{Z}}  \widehat{\phi}(k/N)
e^{-2\pi i kx}=
\sum_{k\in \mathbb{Z}}   N \phi\left( (x-k)N\right),$$
we deduce that
\begin{align*}
    \left |P_M(f) -P_{\Phi,N}(\widehat{f}) \right| &=  \left |P_M(f)-\int_{\mathbb{R}} f(x) \sum_{k \in \mathbb{Z}} N \phi\left( (x-k)N\right) dx\right|\\&\lesssim \norm{f |x|^{\alpha}}_{p} + \norm{\widehat{f} |\xi| ^{\beta}}_{q}.
    \end{align*}
This proves inequality \eqref{eq:regu}.

Inequality \eqref{eq:regu2} follows by applying item (1) in Lemma \ref{lemma:Ngamma} with $\gamma=1$ to $$\sum_{|k|\leq N} \widehat{f}(k) (1-\widehat{\phi}(k/N)),$$ because $0 \leq 1-\widehat{\phi}(k/N)\lesssim k/ N$ and is $\widehat{\phi}$ is non-increasing.

We now prove the negative part of the third item.
Without loss of gene\-rality,
we can  assume that $p\geq 1$ and $q>1$. We show that there exists a function $f$ such that \begin{equation}
       \label{infty}
   \limsup P_N(f)= - \liminf P_N(f)= + \infty.
\end{equation} Since we know that $\lim_N (P_N(f) - P_{N^\gamma}(\widehat{f}\,))=0$
with some 
 $\gamma(\alpha,\beta,p,q)>0$, the same divergence result holds for $P_N(\widehat{f})$.

   Let $N>0$. We  define \begin{eqnarray*}F_N(x)=\sum_{k=0}^N c_k \Delta_k \phi\left(\Delta_k(x-k)\right),\end{eqnarray*} where $$c_k= \frac{1}{ (k+1) ^{1+B}
   \log(k+2)^A }$$ and 
   $$\Delta_k= (k+1)^B \log(k+2)^{A-1},$$ with $A= \frac{\beta}{\beta- 1/q'}>1$ and $B=\frac{ \alpha - 1/p'}{1/p}= \frac{1/q}{ \beta - 1/q'}$.
   By using item (4) in Lemma \ref{lemma:sbp}, noting that
   $$\sum_n
   (n^{-1} + \Delta_{\lfloor n/2 \rfloor}^{-1})^{M'}\approx 1$$
   with large enough $M'$,
   we then have, for $M,N$ large enough, 
   \begin{equation}\label{vsp}
   P_M(F_N) \approx \sum_{n=0}^{\min (N,M)} c_n \Delta _n \approx \min(\log \log N,\log \log M).
   \end{equation}

   Besides, Lemma \ref{lemma:sbp} implies both 
   \begin{equation}\label{vsp1}\norm{ F_N |x|^ \alpha}_{p} \lesssim  1 \end{equation} and     \begin{equation}\label{vsp2}\norm{\widehat{F_N} |\xi|^\beta }_{q} \lesssim  \log ^\frac 1q \log N, \end{equation}
   where, in the last inequality, we also used the Hardy-Littlewood estimate 
   $$\int_0 ^1 |\widetilde{S_k}|^{q} \approx \sum_{j=k}^N c_j^{q} (j-k+1)^{q-2} \lesssim   k^{-Bq -1 } \log^{-qA}(k+2).$$

   Finally, for $(N_k)_{k=1}^\infty$ a rapidly increasing sequence, let 
   $$f(x)=\sum_{j=1}^\infty \frac{(-1)^j}{j^2} \frac{F_{N_j}(x)}{\log^{\frac{1}{q}} \log N_j}.$$ 
   Using the estimates of $\norm{ F_N |x|^ \alpha}_{p}$ and $\norm{ \widehat{F_N} |\xi|^\beta}_{q}$,  we arrive at
    $\norm{ 
    f |x|^ \alpha}_{p} + \norm{ \widehat{f} |\xi|^\beta }_{q}<\infty$. 

We now show that 
   \begin{equation}\label{vsp3}
\lim _{k \to \infty } (-1)^k P_{N_k}(f)= + \infty
\end{equation} provided that $N_k$ increases rapidly enough. 
    Indeed, 
$$P_{N_k}(f)=\left(\sum_{j=1}^k
+
\sum_{j=k+1}^\infty
\right)\frac{(-1)^j}{j^2} \frac{P_{N_k}(F_{N_j}(x))}{\log^{\frac{1}{q}} \log N_j} =I_1+I_2.
$$
Taking into account \eqref{vsp}, we derive
\begin{align*}
  (-1)^k  I_1&\gtrsim
\frac{1}{k^2} \frac{P_{N_k}(F_{N_k}(x))}{\log^{\frac{1}{q}} \log N_k}-
\sum_{j=1}^{k-1}
\frac{1}{j^2} \frac{P_{N_k}(F_{N_j}(x))}{\log^{\frac{1}{q}} \log N_j}
\\&\gtrsim
\frac{1}{k^2} {\log^{1-\frac{1}{q}} \log N_k}-
{\log^{1-\frac{1}{q}} \log N_{k-1}}\to\infty.
\end{align*}
Using again \eqref{vsp},
\begin{align*}
    |I_2|&\lesssim \sum_{j=k+1}^{\infty}
\frac{1}{j^2} \frac{|P_{N_k}(F_{N_j}(x))|}{\log^{\frac{1}{q}} \log N_j}
\\&\lesssim
\sum_{j=k+1}^{\infty}
\frac{1}{j^2} \frac{ \log \log N_k}{\log^{\frac{1}{q}} \log N_j}\lesssim
\frac{ \log \log N_k}{\log^{\frac{1}{q}} \log N_{k+1}}
\to0.
\end{align*}

   We have thus proved \eqref{vsp3} and, therefore, \eqref{infty}.

   

\end{proof}
\subsection{Proof of Theorem \ref{theorem:extkah2}}


The following lemma is based on the classical Rademacher-Salem-Zygmund  randomization technique: 
\begin{lemma}
\label{lemma:random}
For any sequences $(w_k)_{k=0}^N$ and $(c_k)_{k=0}^N$ with $w_k\geq 0$, there exists a choice of signs $(\varepsilon_k)_{k=0}^N$ such that, setting $S_n=\sum_{k=0}^{n} c_k \varepsilon_k e^{2 \pi i k x}$,
\begin{eqnarray}
     \left( \sum_{k=0}^N w_k \norm{{S}_k}_{L^q(\mathbb{T})}^q \right)^{\frac{1}{q}} \lesssim
     \left( \sum_{k=0}^N w_k \norm{{S}_k}_{L^2(\mathbb{T})}^q \right)^{\frac{1}{q}}, \quad q<\infty,
\end{eqnarray}
and, for $q=\infty$,
$$    \norm{{S}_k}_{L^\infty(\mathbb{T})} \lesssim (\log (k+1))^{\frac{1}{2}}  \norm{{S}_k}_{L^2(\mathbb{T})}, \quad 0\leq k\leq N.
$$

\end{lemma}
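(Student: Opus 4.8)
The plan is to take $(\varepsilon_k)_{k=0}^N$ to be independent Rademacher random variables (uniformly distributed in $\{-1,+1\}$) and to show that each of the two claimed inequalities holds for a set of sign choices of positive probability; any choice in that set proves the corresponding assertion. The structural fact that makes the randomization work is that, by orthonormality of $\{e^{2\pi i j x}\}_j$ on $\mathbb{T}$,
\[
\norm{S_k}_{L^2(\mathbb{T})}^2=\sum_{j=0}^k|c_j|^2 ,
\]
which is independent of the signs, so the right-hand sides of both inequalities are deterministic quantities.

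For $q<\infty$ a first-moment argument suffices. By Fubini's theorem and Khintchine's inequality (applied to the real and imaginary parts of $\sum_{j=0}^k c_j e^{2\pi i j x}\varepsilon_j$),
\[
\mathbb{E}\,\norm{S_k}_{L^q(\mathbb{T})}^q=\int_{\mathbb{T}}\mathbb{E}\Bigl|\sum_{j=0}^k c_j e^{2\pi i j x}\varepsilon_j\Bigr|^q\,dx
\lesssim \int_{\mathbb{T}}\Bigl(\sum_{j=0}^k|c_j|^2\Bigr)^{q/2}dx
=\norm{S_k}_{L^2(\mathbb{T})}^q .
\]
Multiplying by $w_k\ge 0$ and summing over $k$ gives $\mathbb{E}\sum_{k=0}^N w_k\norm{S_k}_{L^q(\mathbb{T})}^q\lesssim \sum_{k=0}^N w_k\norm{S_k}_{L^2(\mathbb{T})}^q$, so there is a choice of signs for which the random sum on the left is at most twice its mean; taking $q$-th roots closes this case.

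For $q=\infty$ one needs a single choice of signs valid simultaneously for all $0\le k\le N$, which is where the Salem--Zygmund technique enters. Fix $k$ with $\norm{S_k}_{L^2(\mathbb{T})}>0$ (otherwise $S_k\equiv 0$ and the bound is trivial) and $x\in\mathbb{T}$: then $S_k(x)=\sum_j c_j e^{2\pi i j x}\varepsilon_j$ is a sum of independent mean-zero bounded random variables, hence sub-Gaussian with variance proxy $\lesssim\norm{S_k}_{L^2(\mathbb{T})}^2$, so the classical exponential tail bound for Rademacher sums gives $\mathbb{P}(|S_k(x)|>t)\le C\exp(-c\,t^2/\norm{S_k}_{L^2(\mathbb{T})}^2)$. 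Since $S_k$ is a trigonometric polynomial of degree $k$, Bernstein's inequality $\norm{S_k'}_{L^\infty(\mathbb{T})}\lesssim k\norm{S_k}_{L^\infty(\mathbb{T})}$ shows that evaluating $S_k$ on an equispaced grid of $O(k)$ points recovers $\norm{S_k}_{L^\infty(\mathbb{T})}$ up to a factor $2$. A union bound over that grid then yields, for an absolute constant $\lambda$ large enough,
\[
\mathbb{P}\Bigl(\norm{S_k}_{L^\infty(\mathbb{T})}>\lambda\,(\log(k+2))^{1/2}\norm{S_k}_{L^2(\mathbb{T})}\Bigr)\le C\,k\,e^{-c\lambda^2\log(k+2)}\le (k+2)^{-2}.
\]
Since $\sum_{k\ge 0}(k+2)^{-2}<1$, with positive probability none of these events occurs, and that choice of signs gives $\norm{S_k}_{L^\infty(\mathbb{T})}\lesssim (\log(k+1))^{1/2}\norm{S_k}_{L^2(\mathbb{T})}$ for every $k\le N$ (the case $k=0$ being trivial, with $(\log(k+1))^{1/2}$ understood as $1$ there). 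All the tools used here — Khintchine's inequality, the sub-Gaussian tail estimate, Bernstein's inequality — are classical (see, e.g., \cite{zygmund}); I expect the one genuinely delicate point to be this $q=\infty$ case, namely passing from a pointwise tail bound to an $L^\infty$ bound and doing so uniformly in $k$, which is precisely what the discretization via Bernstein's inequality together with the summability over $k$ of the failure probabilities accomplishes.
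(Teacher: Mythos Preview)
Your proof is correct and follows essentially the same approach as the paper. For $q<\infty$ both arguments are identical (Khintchine plus a first-moment bound); for $q=\infty$ the paper simply cites the relevant estimate from Salem--Zygmund, while you spell out that very argument (sub-Gaussian tails, discretization via Bernstein, union bound over $k$), so your version is more self-contained but not a different route.
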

\begin{proof}
If $q<\infty$, by Khintchin's inequality (see for instance Theorem 6.2.3 in \cite{albiac2006topics}), taking the average over all possible sign combinations we deduce that
\begin{eqnarray*}
     \mathbb{E}\left( \sum_{k=0}^N w_k \norm{{S}_k}_{L^q(\mathbb{T})}^q \right) \lesssim \left( \sum_{k=0}^N w_k \norm{{S}_k}_{L^2(\mathbb{T})}^q \right),
\end{eqnarray*}
whence the result follows. If $q=\infty$, it is shown in   \cite[p. 271]{SalemZygmund}  that there exists $c>0$ such that
    $$\sum_{n=1}^\infty \exp\left({ \frac{\log^\frac12(n+1)  \norm{S_n}_\infty}{  \norm{S_n}_2} - c \log (n+1)} \right)\lesssim 1$$ for some choice of signs, whence the result follows at once.
\end{proof}

\begin{proof}[Proof of Theorem \ref{theorem:extkah2}]

In view of Theorems \ref{theorem:kah2} and \ref{theorem:mainth}, it remains to consider only three cases:
\begin{itemize}

\item[$\cdot$] $q\leq2$ and 
$(\alpha - \frac{1}{p'})(\beta- \frac{1}{q'})\leq\frac{1}{pq}, \quad (p,q)\neq(1,1),$

\item[$\cdot$]
    $p=q=1$, $(\alpha - \frac{1}{p'})(\beta- \frac{1}{q'})=\frac{1}{pq}$,
    
\item[$\cdot$]$q>2$, $(\alpha - \frac{1}{p'})(\beta- \frac{1}{q'})=\frac{1}{2p}$.
   
\end{itemize}

First, let $q\leq2$ and $(p,q)\neq(1,1)$. By item (1)
in Remark \ref{remark}, it suffices to consider the case 
$(\alpha - \frac{1}{p'})(\beta- \frac{1}{q'})=\frac{1}{pq}$.
In this case, by Theorem \ref{theorem:mainth}  (3),
there exists 
a function $f$ such that $\lim_N P_N(f)$ does not exist, which implies that $\lim_N P_N(|f|)=\infty.$

Second,
if $p=q=1$ and $a \beta=1$, set $\Delta_n=n^\alpha$.
By the triangle inequality,
\begin{align*}
    \sum_{n=1}^\infty |f(n)|&\leq \sum_{n=1}^\infty  \left| f(n)- \int_\mathbb{R} \Delta_n \phi(\Delta_n(x-n)) f(x) dx \right|\\ &+
    \sum_{n=1}^\infty \left|\int_\mathbb{R} \Delta_n \phi(\Delta_n(x-n)) f(x) dx \right|,
\end{align*} where, as usual, $\widehat{\phi}$ is a smooth, even, non-increasing function supported on $(-1,1)$ such that $\widehat{\phi}(\xi)\equiv 1$ for $|\xi|\leq 1/2$.
It is plain to see (for instance, by Lemma \ref{lemma:sbp} item (4)) that  one can estimate the second term as follows:
$$\int_\mathbb{R}|f(x) |
\sum_{n=1}^\infty \left|\Delta_n \phi(\Delta_n(x-n)) \right|dx \lesssim \norm{f(x)(1+|x|)^\alpha}_1.$$
For the first one, by Plancherel's formula,
\begin{align*} \sum_{n=1}^\infty \left| f(n)- \int_\mathbb{R} \Delta_n \phi(\Delta_n(x-n)) f(x) dx \right|&\leq \int_\mathbb{R}  |\widehat{f}(\xi)| \sum_n |1- \widehat{\phi}(\xi/\Delta_n)| d \xi \\
&\lesssim \norm{\widehat{f} |\xi|^\beta}_1,
\end{align*} where we used that, for $M$ large enough, 
 $$\sum_{n=1}^\infty  |1- \widehat{\phi}(\xi/\Delta_n)| \lesssim \sum_{n=1}^\infty \min\Big(1, \Big(\frac{\xi}{n^\alpha}\Big)^M \Big) \lesssim |\xi|^{1/\alpha}=|\xi|^{\beta}.$$

Let now $q>2$. The counterexample is similar to the one used in  Theorem \ref{theorem:mainth}. Assume first that $q<\infty$. Set $$c_k= \varepsilon_k \frac{1}{  (1+ k) ^{1+B}
\log(k+2)^A},$$
and $$\Delta_k= (1+k)^B \log(k+2)^{A-1},$$ with $A= \frac{\beta}{\beta- 1/q'}$ and
$B=\frac{ \alpha - 1/p'}{1/p}= \frac{1/2}{ \beta - 1/q'}$.

For $N\in \mathbb{N}$ define
$F_N(x)=\sum_{k=0}^N c_k \Delta_k \phi\left(\Delta_k(x-k)\right)$. Then
we obtain (compare with \eqref{vsp} and \eqref{vsp1})
$$  \lim_M P_M(|F_N|) \approx \sum_{n=0}^N |c_n| \Delta _n \approx \log \log N
$$
and $$\norm{ F_N |x|^ \alpha}_{p} \lesssim  1.$$
To derive the estimate of $\norm{\widehat{F_N}|\xi|^\beta}_q$, cf.
\eqref{vsp2},
we 
observe that by Lemma \ref{lemma:random} there exists a choice of signs $\varepsilon_k$ such that in item (2) of Lemma \ref{lemma:sbp} the $L^q$ norms can be replaced by $L^2$ norms,
namely,
$$ \norm{\widehat{F_N}|\xi|^\beta}_q \lesssim \left( \sum_{k=0}^N \Delta_k^{ \beta q} (\Delta_{k}-\Delta_{k-1}) \norm{\widetilde{S_k}}_{L^2(\mathbb{T})}^q \right)^{\frac{1}{q}}.
$$
Since that 
$$\norm{\widetilde{{S}_k}}_{L^2(\mathbb{T})}^q=\left(\sum_{j=k}^N |c_j|^2 \right)^{\frac{q}{2}} \lesssim k^{-q(1+B)+\frac{q}{2}} \log^{-qA}(k+2),$$ we deduce that
$\norm{\widehat{F_N}|\xi|^\beta}_q\lesssim \log ^{\frac{1}{q}}\log N $.

Thus, in the case $q>2$,
the inequality 
\begin{equation}\label{vsp10}
\sum_n|f(n)|\lesssim \norm{ f|x|^ \alpha}_{p} 
+
\norm{\widehat{f}|\xi|^\beta}_q
\end{equation}
does not hold. Therefore,
there is a function $f$
satisfying \eqref{space} for which 
$\lim_N P_N(|f|)=\infty$.

Finally, we deal with the case $q=\infty$, for which we recall that $\beta>1$ and
$ (\alpha - 1/p')(\beta - 1)= 1/2p$.
Set $$c_k= \begin{cases}
    \varepsilon_k \frac{1}{ (1+ k) ^{1+B}}, \text{ if }\sqrt N \leq k \leq N\\
    0, \text{ otherwise}
\end{cases}$$  and $$\Delta_k=1+ (1+k)^B (\log N)^{p^\#\frac{1}{4 (\beta -1)}},$$ where 
  $A= \frac{\beta}{\beta- 1}$ and
$B=\frac{ \alpha - 1/p'}{1/p}$, 
$p^\# = 0$ if $p>1$ and $p^\# = 1$ if $p=1$.  It is easy to see
that
$$ \lim_M P_M(|F_N|)\approx \sum_{n=0}^N |c_n| \Delta _n \approx 
(\log N)^{p^\#\frac{1}{4 (\beta -1)}+1}.
$$ 
Next, by item (1) Lemma  \ref{lemma:sbp},
$$\norm{F_N |x|^\alpha }_{p}\lesssim 
\log^{\frac{1}{p}}N .$$

Finally, in the same way as before, by Lemma \ref{lemma:random} with $q=\infty$, we deduce that there exists a choice of signs for which for all $k$
\begin{align*}
    \norm{\widetilde{S_k}}_\infty 
&\lesssim 
\log^\frac{1}{2}(N+1) \left( \sum_{j=\max({k,\sqrt{N}})}^{N}\frac{1}{j^{2+2B}}
\right)^\frac{1}{2}
\\&\lesssim \log^\frac{1}{2}(N+1)  \min\left(N^{-1/4-B/2},k^{-1/2-B}\right),
\end{align*}
 which implies that
 $$\norm{ |\xi|^ \beta \widehat{F_N} }_{\infty} \lesssim (\log N)^{p^\#\frac{\beta}{4 (\beta -1)}} \log^\frac{1}{2}(N+1) .$$
Combining the above estimates, we see that inequality \eqref{vsp10}
  does not hold. The proof is concluded.
\end{proof}
\section{
Poisson formula
for general weights
}
Lemma \ref{lemma:sbp} can be easily generalized in the case of general weights. 
\begin{lemma}
\label{lemma:sbp2}
Under all the conditions of Lemma \ref{lemma:sbp}, assuming that $w$ is an even weight function, we have
\begin{enumerate}
    \item $\displaystyle \norm{F w}_{p} \lesssim \left(\sum_{k=0}^N |c_k|^{p} \norm{w(x) \phi( (x- k)\Delta_k)}_p^{p}\Delta_k^{p} \right)^{\frac{1}{p}}; $

\item  if $w$ non-decreasing,  
\begin{align*}
 \norm{\widehat{F} w }_{q}&\lesssim \left(\int^{\Delta_0+2}_0 w ^{q}\right)^\frac{1}{q} \norm{\widetilde{S_0}}_{L^q(\mathbb{T})}\\& + \left( \sum_{k=1}^N w(\Delta_k)^{ q} (\Delta_{k}-\Delta_{k-1}) \norm{\widetilde{S_k}}_{L^q(\mathbb{T})}^q \right)^{\frac{1}{q}};
\end{align*}

\item if $w$ is non-increasing, \begin{align*}   \norm{\widehat{G}w}_{q'} &\lesssim \left( \sum_{k=0}^{N-1} w^{q'}(\Delta_k/2) (\Delta_{k+1}-\Delta_{k}) \norm{{S}_k}_{L^{q'}(\mathbb{T})}^{q'} \right)^{\frac{1}{q'}}\\&+ \left(\int^\infty_{\frac{\Delta_N}2-1} w ^{q'}\right)^\frac{1}{q'} \norm{{S}_N}_{L^{q'}(\mathbb{T})}.
\end{align*}

\end{enumerate}

\end{lemma}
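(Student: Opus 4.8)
The plan is to rerun the proof of Lemma~\ref{lemma:sbp} essentially line by line, carrying the general weight $w$ in place of $(|x|+1)^\alpha$, $(|\xi|+1)^\beta$, $(|\xi|+1)^{-\nu}$, and isolating the two points where the power structure of those weights was used: bounding $w$ on the support of a telescoped difference, and --- for the endpoint terms --- summing the local suprema of $w$ over unit intervals. Evenness together with monotonicity of $w$ is exactly what replaces these two steps; no monotonicity is needed for item~(1).

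For item~(1): as in Lemma~\ref{lemma:sbp}, Hölder's inequality in $k$ with exponents $p,p'$ together with $\sum_k|\phi((x-k)\Delta_k)|\lesssim1$ (from the rapid decay of $\phi$ and $\Delta_k\geq1$) gives $|F(x)|^p\lesssim\sum_k|c_k|^p\Delta_k^p|\phi((x-k)\Delta_k)|$; multiplying by $w(x)^p$ and integrating gives $\norm{Fw}_p^p\lesssim\sum_k|c_k|^p\Delta_k^p\int w(x)^p|\phi((x-k)\Delta_k)|\,dx$, and the rapid decay of $\phi$ turns each integral into $\norm{w\,\phi((x-k)\Delta_k)}_p^p$, which is item~(1).

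For items~(2) and (3): apply the same Abel summation as in Lemma~\ref{lemma:sbp}, writing $\widehat F=\widetilde{S_0}\,\widehat\phi(\cdot/\Delta_0)+\sum_{k\geq1}\widetilde{S_k}\,g_k$ with $g_k=\widehat\phi(\cdot/\Delta_k)-\widehat\phi(\cdot/\Delta_{k-1})$, and $\widehat G=S_N\big(1-\widehat\phi(\cdot/\Delta_N)\big)+\sum_{k<N}S_k\,h_k$ with $h_k=\widehat\phi(\cdot/\Delta_{k+1})-\widehat\phi(\cdot/\Delta_k)$. Monotonicity of $\widehat\phi$ makes $g_k,h_k,\,1-\widehat\phi(\cdot/\Delta_N)\geq0$ with sums $\leq1$, so Hölder in $k$ (exponents $q,q'$, resp.\ $q',q$) yields $|\widehat F|^q\lesssim|\widetilde{S_0}|^q\widehat\phi(\cdot/\Delta_0)+\sum_{k\geq1}|\widetilde{S_k}|^q g_k$ and $|\widehat G|^{q'}\lesssim|S_N|^{q'}\big(1-\widehat\phi(\cdot/\Delta_N)\big)+\sum_{k<N}|S_k|^{q'}h_k$. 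Multiplying by $w^q$ (resp.\ $w^{q'}$), integrating, and periodizing in $\xi=\eta+j$ with $\eta\in[0,1)$: since $g_k$ is supported in $\{\Delta_{k-1}/2\leq|\xi|\leq\Delta_k\}$ and $w$ is even and non-decreasing, $w\leq w(\Delta_k)$ there; using $\|g_k\|_\infty\lesssim(\Delta_k-\Delta_{k-1})/\Delta_k$ (mean value theorem) and that $\operatorname{supp}g_k$ meets $\lesssim\Delta_k$ unit intervals gives $\int_0^1|\widetilde{S_k}(\eta)|^q\sum_{j\in\Z}w(\eta+j)^q g_k(\eta+j)\,d\eta\lesssim w(\Delta_k)^q(\Delta_k-\Delta_{k-1})\norm{\widetilde{S_k}}_{L^q(\mathbb T)}^q$, the main sum in item~(2). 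Item~(3) is symmetric: $h_k$ is supported in $\{\Delta_k/2\leq|\xi|\leq\Delta_{k+1}\}$, where the non-increasing $w$ satisfies $w\leq w(\Delta_k/2)$, yielding the main sum in item~(3).

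The only genuinely new step --- and the main, if minor, obstacle --- is the endpoint terms, where no single value of $w$ is available: for $\widehat F$ the $k=0$ term is supported in $\{|\xi|\leq\Delta_0\}$ and for $\widehat G$ the $k=N$ term in $\{|\xi|\geq\Delta_N/2\}$. There one sums $\sup_I w$ over the unit intervals $I$ covering the support and compares with an integral via monotonicity: for $w$ non-decreasing, $\sup_{[n,n+1]}w^q\leq\int_{n+1}^{n+2}w^q$, so $\sum_{I\subset[0,\Delta_0]}\sup_I w^q\lesssim\int_0^{\Delta_0+2}w^q$, and after periodizing $\widetilde{S_0}$ (evenness handling the negative side) $\int w^q|\widetilde{S_0}|^q\widehat\phi(\cdot/\Delta_0)^q\lesssim\big(\int_0^{\Delta_0+2}w^q\big)\norm{\widetilde{S_0}}_{L^q(\mathbb T)}^q$; symmetrically, for $w$ non-increasing, $\sup_{[n,n+1]}w^{q'}\leq\int_{n-1}^{n}w^{q'}$ gives $\sum_{I\subset[\Delta_N/2,\infty)}\sup_I w^{q'}\lesssim\int_{\Delta_N/2-1}^\infty w^{q'}$, producing the $S_N$ endpoint term of item~(3). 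Assembling the contributions and taking $q$-th (resp.\ $q'$-th) roots gives the three displayed estimates; the cases $q\in\{1,\infty\}$ follow with the usual modifications.
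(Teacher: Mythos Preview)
Your argument is correct and mirrors the paper's approach exactly: the paper states that Lemma~\ref{lemma:sbp2} is obtained by the same proof as Lemma~\ref{lemma:sbp} with the power weight replaced by a general even weight, and your proof carries this out step by step---the same H\"older-in-$k$ for item~(1), the same Abel transformation and periodization for items~(2) and~(3), with monotonicity of $w$ replacing the explicit power bounds on the support of $g_k,h_k$, and with your integral comparison $\sup_{[n,n+1]}w\le\int_{n\pm1}^{n\pm2}w$ handling the two endpoint terms.
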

As a consequence, we obtain a sufficient condition for the validity of \eqref{eqn:einstein}
 in 
 Lebesgue spaces
with general weights. Recall that the Dirichlet kernel 
is given by  $D_n(x)=\sum_{k=-n}^n e^{2 \pi i kx }$.
\begin{theorem}\label{corollary} Let $u,v$ be even, non-decreasing weights and $1\leq p,q\leq  \infty$. Assume that the Schwartz functions are dense in the space with norm 
     $\norm{fu}_p + \norm{\widehat{f}v}_q$
  and  that  there exist two sequences $\Delta$ and $\widetilde{\Delta}$ as in Lemma \ref{lemma:sbp}
  such that
 \begin{equation*}
     \sup_N \left(\sum_{k=0}^N  \norm{u^{-1}(x) \phi( (x- k)\Delta_k)}_{p'}^{p'}\Delta_k^{p'} \right)^{\frac{1}{p'}}<\infty;
 \end{equation*}
 \begin{align*}
    \sup_N  \Bigg( \sum_{k=0}^{N-1} v^{-q'}(\Delta_k/2) (\Delta_{k+1}&-\Delta_{k}) \norm{{D}_k}_{L^{q'}(\mathbb{T})}^{q'} \Bigg)^{\frac{1}{q'}}\\
    &+ \left(\int^\infty_{\frac{\Delta_N}{2}-1} v^{-q'}\right)^\frac{1}{q'} \norm{{D}_N}_{L^{q'}(\mathbb{T})}<\infty;
 \end{align*}
 \begin{equation*}
     \sup_N \left(\sum_{k=0}^N  \norm{v^{-1}(x) \phi( (x- k)\widetilde{\Delta}_k)}_{q'}^{q'}\widetilde{\Delta}_k^{q'} \right)^{\frac{1}{q'}}<\infty;
 \end{equation*} and 
 \begin{align*}
   \sup_N  \Bigg( \sum_{k=0}^{N-1} u^{-p'}(\widetilde{\Delta}_k/2) (\widetilde{\Delta}_{k+1} &-\widetilde{\Delta}_{k}) \norm{{D}_k}_{L^{p'}(\mathbb{T})}^{p'} \Bigg)^{\frac{1}{p'}}\\&+ \left(\int^\infty_{{\frac{\widetilde{\Delta}_N}{2}-1}} u^{-p'}\right)^\frac{1}{p'} \norm{{D}_N}_{L^{p'}(\mathbb{T})}<\infty.
 \end{align*}
    Then, for any $f$ with $\norm{fu}_p + \norm{\widehat{f}v}_q<\infty$,  the formula \eqref{eqn:einstein}
 holds.  
\end{theorem}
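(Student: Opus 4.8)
The plan is to run the scheme of Lemma~\ref{lemma:Ngamma} twice. I would first reduce \eqref{eqn:einstein} to the two uniform estimates
\[
\sup_N |P_N(f)|\lesssim \norm{fu}_p+\norm{\widehat{f}\,v}_q,\qquad \sup_N |P_N(\widehat{f}\,)|\lesssim \norm{fu}_p+\norm{\widehat{f}\,v}_q,
\]
valid for every $f$ with $\norm{fu}_p+\norm{\widehat{f}\,v}_q<\infty$. Before that, I would check that $P_N$ even makes sense: the $k=0$ summand of the first hypothesis gives $\norm{u^{-1}\phi(\,\cdot\,\Delta_0)}_{p'}<\infty$, which (since $\phi(0)\neq0$ and $\phi$ is continuous) forces $u^{-1}\in L^{p'}$ near the origin; comparing the $k$-th summand with the interval $[k,k+1]$ and using $\Delta_k\geq1$ together with the monotonicity and evenness of $u$ upgrades finiteness of the supremum to $u^{-1}\in L^{p'}(\R)$, and likewise the third hypothesis yields $v^{-1}\in L^{q'}(\R)$. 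Hence $\norm{f}_1+\norm{\widehat{f}}_1\lesssim\norm{fu}_p+\norm{\widehat{f}\,v}_q$, so $f$ and $\widehat{f}$ are continuous and bounded and all the Parseval identities used below are legitimate.

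For the first estimate I would write $P_N(f)=\sum_{k=0}^N f(k)+\sum_{k=1}^N f(-k)$. For the first sum, fix $N$, let $F,G$ be the objects of Lemma~\ref{lemma:sbp} attached to the sequence $(\Delta_k)$ and to $c_k\equiv1$, and use $\sum_{k=0}^N f(k)=\int_\R f\,\overline F+\int_\R\widehat{f}\,\overline{\widehat G}$ (Parseval, since $\sum_{k=0}^N\delta_k$ is $F$ plus the distributional part of $G$). Hölder with exponents $(p,p')$ and $(q,q')$ gives $|\sum_{k=0}^N f(k)|\leq\norm{fu}_p\norm{Fu^{-1}}_{p'}+\norm{\widehat{f}\,v}_q\norm{\widehat G v^{-1}}_{q'}$. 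Now I would invoke Lemma~\ref{lemma:sbp2}: item~(1) with weight $u^{-1}$ and exponent $p'$ bounds $\norm{Fu^{-1}}_{p'}$ by the quantity in the first hypothesis (no monotonicity of $u^{-1}$ is needed there), while item~(3) with weight $v^{-1}$ — which is even and non-increasing because $v$ is even and non-decreasing — and exponent $q'$ bounds $\norm{\widehat G v^{-1}}_{q'}$ by the expression in the second hypothesis, after replacing $\norm{S_k}_{L^{q'}(\mathbb{T})}$ by $\norm{D_k}_{L^{q'}(\mathbb{T})}$; this is harmless since for $c_j\equiv1$ one has the exact identity $|S_{2k}(\xi)|=|D_k(\xi)|$ together with $\norm{D_k}_{L^{q'}(\mathbb{T})}\approx\norm{D_{2k}}_{L^{q'}(\mathbb{T})}$. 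The reflected sum $\sum_{k=1}^N f(-k)$ is treated identically with $h:=f(-\,\cdot\,)$ in place of $f$, using evenness of $u,v$ so that $\norm{hu}_p=\norm{fu}_p$ and $\norm{\widehat{h}\,v}_q=\norm{\widehat{f}\,v}_q$, and absorbing the single term $f(0)$ into $\norm{\widehat{f}}_1\lesssim\norm{\widehat{f}\,v}_q$. Taking $\sup_N$ gives the first estimate.

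The second estimate follows by the symmetric argument, with $f$ replaced by $\widehat{f}$, the sequence $(\Delta_k)$ replaced by $(\widetilde{\Delta}_k)$, and the roles of $(u,p)$ and $(v,q)$ interchanged: Lemma~\ref{lemma:sbp2}(1) is used with $(v^{-1},q')$ and item~(3) with $(u^{-1},p')$, one uses $\widehat{\widehat{f}}\,(x)=f(-x)$ and the evenness of $u,v$, and the third and fourth hypotheses are exactly what the estimates demand. Finally, to deduce \eqref{eqn:einstein}, given $f$ in the space I would pick Schwartz $g_n$ with $\varepsilon_n:=\norm{(f-g_n)u}_p+\norm{(\widehat{f}-\widehat{g_n})v}_q\to0$; applying the two uniform estimates to $f-g_n$ gives $|P_N(f)-P_N(g_n)|\lesssim\varepsilon_n$ and $|P_N(\widehat{f}\,)-P_N(\widehat{g_n})|\lesssim\varepsilon_n$ uniformly in $N$, and since \eqref{eqn:einstein} holds for each $g_n$ a routine $3\varepsilon$ argument shows $(P_N(f))_N$ and $(P_N(\widehat{f}\,))_N$ are Cauchy, hence convergent, with $|\lim_N P_N(f)-\lim_N P_N(\widehat{f}\,)|\lesssim\varepsilon_n$ for all $n$; letting $n\to\infty$ finishes the proof. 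The main things to be careful about are purely technical: extracting $u^{-1}\in L^{p'}$ and $v^{-1}\in L^{q'}$ from the $k=0$ summands (so that $P_N$ is meaningful), the elementary comparison $\norm{S_k}_{L^r(\mathbb{T})}\approx\norm{D_k}_{L^r(\mathbb{T})}$, and the bookkeeping of the weight/exponent swap on the $\widehat{f}$-side — the analytic content is already packaged in Lemma~\ref{lemma:sbp2}.
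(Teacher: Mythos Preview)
Your proposal is correct and follows precisely the approach the paper intends. The paper states this theorem only as a consequence of Lemma~\ref{lemma:sbp2} without an explicit proof; the implied argument is exactly the scheme of Lemma~\ref{lemma:Ngamma} (split $\sum_k f(k)=\int f\overline{F}+\int\widehat{f}\,\overline{\widehat{G}}$ with $c_k\equiv 1$, apply H\"older, bound $\norm{Fu^{-1}}_{p'}$ and $\norm{\widehat{G}v^{-1}}_{q'}$ via Lemma~\ref{lemma:sbp2}, then run the density argument), which is what you carry out, including the necessary preliminary check that $u^{-1}\in L^{p'}$ and $v^{-1}\in L^{q'}$ so that $P_N(f)$ and $P_N(\widehat{f})$ are well defined.
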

\begin{remark}
    It is possible to obtain an analogue of Theorem \ref{corollary} for the absolute convergence of $P_N(|f|)$ by using the same ideas as in the proof of Theorem \ref{theorem:extkah2}, item (2).
\end{remark}

\bibliographystyle{amsplain}
\bibliography{main}

\end{document}